\newtheorem{remark}[theorem]{Remark}
\newcommand{\e}{\mathrm{e}}
\newcommand{\E}{\mathbb{E}}
\newcommand{\bigo}[1]{\mathcal{O}(#1)}
\providecommand{\norm}[1]{\ensuremath{\lVert#1\rVert}}
\providecommand{\Norm}[1]{\ensuremath{\Big\lVert#1\Big\rVert}}
\providecommand{\tnorm}[1]{\lvert\lvert\lvert #1 \rvert\rvert\rvert} 
\newcommand{\abs}[1]{\left\vert#1\right\vert}
\newcommand{\dt}{\mathrm{d}t}
\newcommand{\Ph}{\mathcal{P}_h}
\newcommand{\Rh}{\mathcal{R}_h}
\newcommand{\dd}{\mathrm{d}}
\newcommand{\Tr}{\mathrm{Tr}}
\newcommand{\HS}{\mathrm{HS}}
\title{Full discretisation of semi-linear stochastic wave equations driven by multiplicative noise}
\author{Rikard Anton\thanks{Department of Mathematics and Mathematical
    Statistics, Ume{\aa} University, SE--901~87~Ume{\aa}, 
              Sweden ({\tt rikard.anton@umu.se}).}
       \and 
       David Cohen\thanks{Department of Mathematics and Mathematical
              Statistics, Ume{\aa} University, SE--901~87~Ume{\aa}, 
              Sweden ({\tt david.cohen@umu.se}). 
              Department of Mathematics, University of Innsbruck, 
              A--6020~Innsbruck, Austria  
              ({\tt david.cohen@uibk.ac.at})} 
       \and Stig Larsson\thanks{Department of Mathematical Sciences, 
              Chalmers University of Technology and University 
              of Gothenburg, SE--412 96 Gothenburg, Sweden
              ({\tt stig@chalmers.se}).} 
       \and Xiaojie Wang\thanks{School of Mathematics and Statistics,  
              Central South University, CN--410083 Changsha, Hunan, China
              ({\tt x.j.wang7@csu.edu.cn})}
         }
\begin{document}

\maketitle

\begin{abstract}
A fully discrete approximation of the
semi-linear stochastic wave equation driven 
by multiplicative noise is 
presented. A standard linear finite element  
approximation is used in space 
and a stochastic trigonometric method for the 
temporal approximation. 
This explicit time integrator allows 
for mean-square error bounds independent of the space 
discretisation and thus do not suffer from 
a step size restriction as in the often used 
St\"ormer-Verlet-leap-frog scheme. Furthermore, 
it satisfies an almost trace formula 
(i.\,e., a linear drift of the expected value 
of the energy of the problem).  
Numerical experiments are presented and 
confirm the theoretical results.
\end{abstract}

\begin{keywords}
Semi-linear stochastic wave equation, Multiplicative noise,  
Strong convergence, Trace formula, Stochastic 
trigonometric methods, Geometric numerical integration
\end{keywords}

\begin{AMS}
65C20, 60H10, 60H15, 60H35, 65C30
\end{AMS}

\pagestyle{myheadings}
\thispagestyle{plain}
\markboth{R.~Anton, D.~Cohen, S.~Larsson, and X.~Wang}{Full discretisation of stochastic wave equations}


\section{Introduction}\label{sect:intro}
We consider the numerical discretisation of semi-linear stochastic\\ wave equations of the form
\begin{equation}
\begin{aligned}
&\mathrm{d}\dot{u} - \Delta u\, \mathrm{d}t = f(u)\,\mathrm{d}t+g(u)\,\mathrm{d}W &&\quad \mathrm{in}\: \ 
\mathcal{D}\times(0,\infty), \\
&u = 0 &&\quad \mathrm{in}\: \ \partial\mathcal{D}\times(0,\infty), \\
&u(\cdot,0) = u_0, \ \dot{u}(\cdot,0)=v_0 &&\quad \mathrm{in}\: \ \mathcal{D},
\end{aligned}
\label{swe}
\end{equation}
where $u=u(x,t)$ and $\mathcal{D}\subset\mathbb{R}^d$, $d=1,2,3$, is a bounded convex 
domain with polygonal boundary $\partial\mathcal{D}$. 
The ``$\cdot$'' denotes the time derivative $\frac{\partial}{\partial t}$. 
Assumptions on the smoothness 
of the nonlinearities $f$ and $g$ will be given below. 
The stochastic process $\{W(t)\}_{t\geq 0}$ is an $L_2(\mathcal{D})$-valued 
(possibly cylindrical) $Q$-Wiener process with respect to a
normal filtration $\{\mathcal{F}_t\}_{t\geq 0}$ on a filtered
probability space $(\Omega,\mathcal{F},\mathbb{P},\{\mathcal{F}_t\}_{t\geq 0})$. 
The initial data $u_0$ and $v_0$ are $\mathcal{F}_0$-measurable random
variables. We will numerically solve this problem with a linear finite
element method in space and a stochastic trigonometric method in time.

We refer to the introductions of \cite{kls10} and \cite{cls13} 
for the relevant literature on the spatial, respectively temporal, 
discretisation of stochastic (linear) wave equations. 
Further, the recent publication \cite{wgt13} presents a full discretisation 
of the wave equation with additive noise: a spectral 
Galerkin approximation is used in space and an adapted 
stochastic trigonometric method, using linear 
functionals of the noise as in \cite{jk09}, 
is employed in time. Furthermore, the time 
discretisation of nonlinear stochastic wave equations by 
stochastic trigonometric methods is analysed in \cite{raey}. 
Finally, let us mention the recent publication \cite{cqs14} 
which analyses convergence in $L^p(\Omega)$ 
of the stochastic trigonometric method applied to the 
one-dimensional nonlinear stochastic wave equation. 

In the present publication, we prove mean-square convergence for the 
full discretisation to the exact solution to the nonlinear problem \eqref{swe}. 
Furthermore, using this result, we derive a geometric property of our 
numerical integrator, namely a trace formula. 
The trace formula (the linear drift of the expected value of the energy) 
for the exact solution of \eqref{swe} as well as for the finite element solution 
and the completely discrete solution are presented.  

Strong approximations of stochastic wave equations are relevant in many real applications. 
For example, let us consider the motion of a strand of DNA floating in a liquid as presented 
in \cite{MR2508773} and references therein. The motion of the DNA molecule may be modeled 
by a wave equation and the impact of the fluid's molecules may be modeled by a stochastic force acting 
on the string. When two normally distant parts of the DNA get close enough, biological events, 
such as release of enzymes, occur. It is thus of interest to consider strong 
approximation of stochastic wave equations in such a situation.

The paper is organised as follows. We introduce some notations and mention 
some useful results in the next section. Section~\ref{sect:ms}  
presents a mean-square convergence analysis for our numerical discretisation. 
A trace formula for the exact and numerical solutions is given in Section~\ref{sect:trace}. 
Finally, numerical experiments illustrating the rates of convergence and the trace formula 
of the numerical solution are given in the final section.


\section{Notations and useful results}\label{sect:not}
Let $U$ and $H$ be separable Hilbert spaces with norms $\norm{\cdot}_U$ and $\norm{\cdot}_H$ respectively. 
We denote the space of bounded linear operators from $U$ to $H$ by $\mathcal{L}(U,H)$, 
and we let $\mathcal{L}_2(U,H)$ be the set of Hilbert-Schmidt operators with norm
$$\norm{T}_{\mathcal{L}_2(U,H)}:=\left(\sum_{k=1}^\infty\norm{Te_k}_H^2\right)^{1/2},$$
where $\{e_k\}_{k=1}^\infty$ is an arbitrary orthonormal basis of $U$. 
If $H=U$, then we write $\mathcal{L}(U)=\mathcal{L}(U,U)$ and $\HS=\mathcal{L}_2(U,U)$. 
Let $Q\in\mathcal{L}(U)$ be a self-adjoint, positive semidefinite operator. 
We denote the space of Hilbert-Schmidt operators from $Q^{1/2}(U)$ to $H=U$ by $\mathcal{L}_2^0$ with norm
$$\norm{T}_{\mathcal{L}_2^0}=\norm{TQ^{1/2}}_{\HS}.$$
For the stochastic wave equation \eqref{swe}, we define $U:=L_2(\mathcal{D})$ 
and denote the $L_2(\mathcal{D})$-norm by $\norm{\cdot}:=\norm{\cdot}_{L_2(\mathcal{D})}$. 
Further, we set $\Lambda=-\Delta$ with $D(\Lambda)=H^2(\mathcal{D})\cap H^1_0(\mathcal{D})$.
 
Let $(\Omega,\mathcal{F},\mathbb{P},\{\mathcal{F}_t\}_{t\geq 0})$ be a filtered probability space 
and $L_2(\Omega,H)$ the space of $H$-valued square integrable random variables with norm
$$
\norm{v}_{L_2(\Omega,H)}:=\E[\norm{v}_H^2]^{1/2}.
$$
Next, we define the space $\dot{H}^\alpha=D(\Lambda^{\alpha/2})$, for $\alpha\in\mathbb{R},$ with norm
$$\norm{v}_\alpha:=\norm{\Lambda^{\alpha/2}v}_{L_2(\mathcal{D})}=
\left(\sum_{j=1}^\infty\lambda_j^\alpha(v,\varphi_j)_{L_2(\mathcal{D})}^2\right)^{1/2},$$
where $\{(\lambda_j,\varphi_j)\}_{j=1}^\infty$ are the eigenpairs of $\Lambda$ with orthonormal eigenvectors. 
We also introduce the space 
$$
H^\alpha:=\dot{H}^\alpha\times\dot{H}^{\alpha-1},
$$
with norm
$\tnorm{v}_\alpha^2:=\norm{v_1}_\alpha^2+\norm{v_2}_{\alpha-1}^2,$ for
$\alpha\in\mathbb{R}$ and $v=[v_1,v_2]^T$.  
Note that $\dot{H}^0=U:=L_2(\mathcal{D})$ and $H:=H^0=\dot H^0\times\dot H^{-1}$. 
In the following we denote the scalar product by 
$(\cdot,\cdot)=(\cdot,\cdot)_{L_2(\mathcal{D})}$ and recall the notation for the norm 
$\norm{\cdot}=\norm{\cdot}_{L_2(\mathcal{D})}$.

Denoting the velocity of the solution to our stochastic partial
differential equation by $u_2:=\dot u_1:=\dot u$,  
one can rewrite \eqref{swe} as
\begin{equation}
\begin{split}
& \mathrm{d}X(t) = AX(t)\,\mathrm{d}t+F(X(t))\,\mathrm{d}t+G(X(t))\,\mathrm{d}W(t), \ \ t>0, \\
& X(0) = X_0,
\label{swe2}
\end{split}
\end{equation}
where $X:=\begin{bmatrix} u_1\\ u_2 \end{bmatrix}$, 
$A:=\begin{bmatrix} 0 & I \\ -\Lambda & 0 \end{bmatrix}$, 
$F(X):=\begin{bmatrix} 0 \\ f(u_1) \end{bmatrix}$, 
$G(X):=\begin{bmatrix} 0 \\ g(u_1) \end{bmatrix}$ 
and $X_0:=\begin{bmatrix} u_0 \\v_0 \end{bmatrix}$. 
The operator $A$ with $D(A)=H^1=\dot H^1\times \dot H^{0}$ 
is the generator of a strongly continuous
semigroup of bounded linear operators 
$E(t)=\e^{tA}$ on $H=H^0=\dot{H}^0\times \dot{H}^{-1}$, in fact, 
a unitary group. 

Let $\{\mathcal{T}_h\}$ be a quasi-uniform family of triangulations of the convex polygonal domain 
$\mathcal{D}$ with $h_\mathrm{K}=\text{diam}(K)$ and $h=\max_{\mathrm{K}\in\mathcal{T}_h}h_\mathrm{K}$. 
Let $V_h\subset H_0^1(\mathcal{D}) = \dot{H}^1$ be the space 
of piecewise linear continuous functions with respect to $\mathcal{T}_h$ 
which are zero on the boundary of $\mathcal{D}$, and let $\mathcal{P}_h:\dot{H}^0\rightarrow V_h$ denote the $\dot{H}^0$-orthogonal projector and
  $\mathcal{R}_h:\dot{H}^1\rightarrow V_h$ the $\dot{H}^1$-orthogonal
  projector (Ritz projector).  Thus, 
  \begin{align*}
    (\mathcal{P}_hv,w_h)= (v,w_h), \quad
    (\nabla\mathcal{R}_hu,\nabla w_h)= (\nabla u,\nabla w_h), \quad 
   \forall v\in\dot{H}^0,\,u\in\dot{H}^1,\,w_h\in V_h.
  \end{align*}
 
The discrete Laplace operator $\Lambda_h\colon V_h\to V_h$ is then defined by
$$
(\Lambda_hv_h,w_h)=(\nabla v_h,\nabla w_h)\quad \forall w_h\in V_h.
$$
We note that $\mathcal{R}_h=\Lambda_h^{-1}\mathcal{P}_h\Lambda$. We also define
discrete variants of $\norm{\cdot}_\alpha$ and $\dot{H}^\alpha$ by 
$$
\norm{v_h}_{h,\alpha}=\norm{\Lambda_h^{\alpha/2}v_h},\quad v_h\in V_h
$$
and $\dot{H}_h^\alpha=V_h$ equipped with the norm $\norm{\cdot}_{h,\alpha}$. 
Finally, the finite element approximation of \eqref{swe} can then be written as
\begin{equation}
\begin{split}
& \mathrm{d}\dot{u}_{h,1}(t) + \Lambda_h u_{h,1}(t)\,\mathrm{d}t = 
\mathcal{P}_h\,f(u_{h,1}(t))\,\dt+\mathcal{P}_h\,g(u_{h,1}(t))\,\mathrm{d}W(t), \ \ t>0, \\
& u_{h,1}(0)=u_{h,0}, \ u_{h,2}(0)=v_{h,0},
\label{femswe1}
\end{split}
\end{equation}
or in the abstract form
\begin{equation}
\begin{split}
& \mathrm{d}X_h(t) = A_hX_h(t)\,\mathrm{d}t+\Ph F(X_h(t))\,\dt
+\Ph G(X_h(t))\,\mathrm{d}W(t), \ \ t>0, \\
& X_h(0) = X_{h,0},
\label{femswe2}
\end{split}
\end{equation}
where $ A_h := \begin{bmatrix} 0 & I \\ -\Lambda_h & 0 \end{bmatrix}$,
$X_h := \begin{bmatrix} u_{h,1} \\ u_{h,2} \end{bmatrix}$, $F$ and $G$
are as before, and $X_{h,0} := \begin{bmatrix} u_{h,0} \\ v_{h,0} \end{bmatrix}$
with $u_{h,0}=\Rh u_0,\ v_{h,0}=\Ph v_0\in V_h$. Note the abuse of notation for the projection 
$\Ph F(X_h)=(0,\Ph f(u_{h,1}))^T$ and similarly for $\Ph G(X_h)$. This will be used throughout the paper.
Again, $A_h$ is the generator of a $C_0$-semigroup $E_h(t)=\e^{tA_h}$ 
on $H_h:=\dot{H}_h^0\times \dot{H}_h^{-1}$. 

We study the equations \eqref{swe2} and \eqref{femswe2} in their mild form
\begin{align}
X(t)&=E(t)X_0+\int_0^tE(t-s)F(X(s))\,\mathrm{d}s+\int_0^tE(t-s)G(X(s))\,\mathrm{d}W(s),\label{exactsol}\\
X_h(t)&=E_h(t)X_{h,0}+\int_0^tE_h(t-s)\Ph F(X_h(s))\,\mathrm{d}s+
\int_0^tE_h(t-s)\Ph G(X_h(s))\,\mathrm{d}W(s),\label{exactsolfem}
\end{align}
where the semigroups can be expressed as 
\begin{align}
E(t)&=\begin{bmatrix} C(t) & \Lambda^{-1/2}S(t) \\ -\Lambda^{1/2}S(t) & C(t) \end{bmatrix},\label{E}\\
E_h(t)&=\begin{bmatrix} C_h(t) & \Lambda_h^{-1/2}S_h(t) \\ -\Lambda_h^{1/2}S_h(t) & C_h(t) \end{bmatrix},
\label{E_h}
\end{align}
with $C(t)=\cos(t\Lambda^{1/2})$, $S(t)=\sin(t\Lambda^{1/2})$, $C_h(t)=\cos(t\Lambda_h^{1/2})$ 
and $S_h(t)=\sin(t\Lambda_h^{1/2})$.

In order to ensure existence and uniqueness of problem \eqref{swe} we shall assume that 
$u_0\in L_2(\Omega,\dot{H}^{\gamma})$ and $v_0\in
L_2(\Omega,\dot{H}^{\gamma-1})$, with $\gamma=\max(\beta,1)$  
for some regularity parameter $\beta\geq 0$, 
and that the functions $f\colon L_2(\mathcal{D})\to L_2(\mathcal{D})$ 
and $g\colon L_2(\mathcal{D})\to \mathcal{L}_2^0$ satisfy 
\begin{equation}
\begin{aligned}\label{assFG}
\norm{f(u)-f(v)}+\norm{g(u)-g(v)}_{\mathcal{L}_2^0}&\leq C \norm{u-v},&&\quad \text{if}\: \ \beta\geq 0,\\
\norm{f(u)}+\norm{\Lambda^{(\beta-1)/2}g(u)}_{\mathcal{L}_2^0} &\leq C (1+\norm{u} ),&&\quad\text{if}\: \ 0\leq\beta\leq1,\\
\norm{\Lambda^{(\beta-1)/2}f(u)}+\norm{\Lambda^{(\beta-1)/2}g(u)}_{\mathcal{L}_2^0} 
&\leq C (1+\norm{\Lambda^{(\beta-1)/2}u} ),&&\quad\text{if}\: \ \beta>1,
\end{aligned}
\end{equation}
for all $u,v\in L_2(\mathcal{D})$ in the first two inequalities and
for all $u\in \dot{H}^{\beta-1}$ in the last one. 
Through the text, $C$ (or $C_1, C_2, K_1, K_2$ etc.) denotes a generic positive constant 
that may vary from line to line.  We assume that the order of initial
  regularity $\gamma\ge1$ so that the discrete initial value
  $u_{h,0}=\Rh u_0$ is well defined. 

\begin{lemma}
Assume that $u_0\in L_2(\Omega,\dot{H}^{\gamma})$, $v_0\in
L_2(\Omega,\dot{H}^{\gamma-1})$ with $\gamma=\max(\beta,1)$ 
and the functions $f$ and $g$ satisfy \eqref{assFG} for some $\beta\geq0$.  
Then there exists a unique solution to the stochastic wave equation \eqref{swe2} 
and the finite element equation \eqref{femswe2} given by the solution 
of their respective mild equation, i.\,e., equations \eqref{exactsol} and \eqref{exactsolfem}.
\end{lemma}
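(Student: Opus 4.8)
The plan is to prove existence and uniqueness by the Banach fixed-point theorem applied to the mild formulation, handling \eqref{swe2} and \eqref{femswe2} in parallel since they share the same structure. The first and only genuinely delicate step is to verify that the coefficients $F$ and $G$ are globally Lipschitz continuous as maps into the Hilbert space $H=\dot H^0\times\dot H^{-1}$. Because of the block structure $F(X)=[0,f(u_1)]^T$ and $G(X)=[0,g(u_1)]^T$, one has $\tnorm{F(X)-F(Y)}_0=\norm{f(u_1)-f(v_1)}_{-1}$ and the analogous identity for $G$ in the Hilbert--Schmidt sense. Since $\lambda_1>0$ the operator $\Lambda^{-1/2}$ is bounded, giving the embedding $\norm{\cdot}_{-1}\le C\norm{\cdot}_0$, so the $L_2(\mathcal{D})$-Lipschitz bound on $f$ and $g$ in the first line of \eqref{assFG} transfers to a global Lipschitz bound for $F$ and $G$ on $H$ (and hence linear growth). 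Only this first line of \eqref{assFG} is needed here; the higher-order growth conditions pertain to the regularity analysis. The same bounds hold for $\Ph F$ and $\Ph G$ with the same constants, as $\norm{\Ph}\le1$.

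Next I would fix $T>0$ and introduce the Banach space $\mathcal{Z}$ of $H$-valued predictable processes $X$ with $\norm{X}_{\mathcal{Z}}^2:=\sup_{0\le t\le T}\E\tnorm{X(t)}_0^2<\infty$, and define the map
\[
\Phi(X)(t)=E(t)X_0+\int_0^tE(t-s)F(X(s))\,\dd s+\int_0^tE(t-s)G(X(s))\,\dd W(s).
\]
To see that $\Phi$ maps $\mathcal{Z}$ into itself I would use that $E$ is a unitary group, so $\tnorm{E(t)v}_0=\tnorm{v}_0$; the deterministic integral is then controlled by the Cauchy--Schwarz inequality together with the linear growth of $F$, and the stochastic integral by the It\^o isometry,
\[
\E\tnorm{\int_0^tE(t-s)G(X(s))\,\dd W(s)}_0^2=\int_0^t\E\norm{E(t-s)G(X(s))}_{\mathcal{L}_2(Q^{1/2}(U),H)}^2\,\dd s,
\]
using that $E(t-s)G(X(s))$ is a genuine predictable, square-integrable Hilbert--Schmidt integrand into $H$, whose norm is bounded by $C\norm{g(u_1)}_{\mathcal{L}_2^0}$ via the embedding above. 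Here $X_0\in L_2(\Omega,H)$ by the assumed regularity of $u_0$ and $v_0$.

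For the contraction step I would estimate, again using the isometry of $E$, the Lipschitz bounds on $F$ and $G$, and the It\^o isometry,
\[
\E\tnorm{\Phi(X)(t)-\Phi(Y)(t)}_0^2\le C\int_0^t\E\tnorm{X(s)-Y(s)}_0^2\,\dd s.
\]
Iterating this inequality shows that some power $\Phi^n$ is a strict contraction on $\mathcal{Z}$ (with constant of order $C^nT^n/n!$), so $\Phi$ has a unique fixed point in $\mathcal{Z}$, which is the sought mild solution, and uniqueness is immediate from the fixed-point property. Since $E_h$ is a $C_0$-semigroup that is bounded uniformly in $h$ on $H_h$ (indeed unitary, as is visible from \eqref{E_h}) and $\Ph$ is a contraction, the identical argument yields existence and uniqueness for \eqref{exactsolfem}, with constants independent of $h$.

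The main obstacle, and essentially the only nonroutine point, is the first step: confirming that the $L_2(\mathcal{D})$-based hypotheses \eqref{assFG} on the scalar nonlinearities $f$ and $g$ really produce Lipschitz continuity of the vector fields $F$ and $G$ in the weaker product norm $\tnorm{\cdot}_0$ of $H$, and in particular that $G(X)=[0,g(u_1)]^T$ defines a legitimate Hilbert--Schmidt integrand from $Q^{1/2}(U)$ into $H$ so that the It\^o isometry applies. Once these mapping properties are secured, the contraction estimate and the transfer to the finite element system are standard.
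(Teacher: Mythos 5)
Your argument is correct and is essentially the same as the paper's: the authors do not write out a proof but simply invoke the standard existence--uniqueness theorem for mild solutions of semilinear equations with globally Lipschitz coefficients (Theorem~7.4 of Da Prato--Zabczyk), whose proof is exactly the Banach fixed-point iteration you describe, and your verification that the first line of \eqref{assFG} transfers to Lipschitz/linear-growth bounds for $F$, $G$, $\Ph F$, $\Ph G$ in $H$ (via boundedness of $\Lambda^{-1/2}$ and $\norm{\Ph}\le 1$) together with unitarity of $E(t)$ and $E_h(t)$ is precisely the hypothesis check that citation requires.
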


The proof of this lemma follows from \cite[Theorem 7.4]{DaPrato1992}, see also the proof of Theorem~2.1 in \cite{raey}.

We now collect some results that we will use later on. Sketches of the proofs of these results 
are collected in the appendix at the end of this paper.

\noindent $\bullet$ The error estimates for the cosine and sine operators (Corollary~4.2 in \cite{kls10}):
Denote $X_0=[u_0,v_0]^T$ and let 
\begin{align*}
{\mathcal{G}_h(t)}X_0&=\bigl(C_h(t)\Rh-C(t)\bigr)u_0+\bigl(\Lambda_h^{-1/2}S_h(t)\Ph-\Lambda^{-1/2}S(t)\bigr)v_0,\\
\dot{{\mathcal{G}}}_h(t)X_0&=-\bigl(\Lambda_h^{1/2}S_h(t)\Rh-\Lambda^{1/2}S(t)\bigr)u_0+\bigl(C_h(t)\Ph-C(t)\bigr)v_0.
\end{align*}
Then we have
\begin{align}\label{initialerror1}
\begin{aligned}
\norm{{\mathcal{G}}_h(t)X_0}&\leq C\cdot(1+t)\cdot h^{\gamma-1}\tnorm{X_0}_\gamma,&&\quad t\geq0,\quad\gamma\in[1,3], \\
\norm{\dot{{\mathcal{G}}}_h(t)X_0}&\leq C\cdot(1+t)\cdot h^{\frac23(\gamma-1)}\tnorm{X_0}_\gamma,&&\quad t\geq0,\quad\gamma\in[1,4].
\end{aligned}
\end{align}
These will be used to estimate the error contributions from the
initial values.   In order to deal with the convolution terms in
\eqref{exactsolfem} we single out the following error estimates.  Let 
\begin{align*}
{\mathcal{K}_h(t)}v_0&=\bigl(\Lambda_h^{-1/2}S_h(t)\Ph-\Lambda^{-1/2}S(t)\bigr)v_0,\\
\dot{{\mathcal{K}}}_h(t)v_0&=\bigl(C_h(t)\Ph-C(t)\bigr)v_0.
\end{align*}
Then we have
\begin{align}\label{initialerror}
\begin{aligned}
\norm{{\mathcal{K}}_h(t)v_0}&\leq C\cdot(1+t)\cdot h^{\frac23\beta}\norm{v_0}_{\beta-1},&&\quad t\geq0,\quad\beta\in[0,3], \\
\norm{\dot{{\mathcal{K}}}_h(t)v_0}&\leq C\cdot(1+t)\cdot h^{\frac23(\beta-1)}\norm{v_0}_{\beta-1},&&\quad t\geq0,\quad\beta\in[1,4].
\end{aligned}
\end{align}

\noindent $\bullet$  The temporal H\"{o}lder continuity of the sine and cosine operators, 
see $(4.1)$ in \cite{cls13}:
\begin{equation}\label{o41cls}
\begin{aligned}
\norm{(S_h(t)-S_h(s))\Lambda_h^{-\beta/2}}_{\mathcal{L}(U)}&\leq C\cdot\abs{t-s}^\beta,&&\quad\beta\in[0,1],\\
\norm{(C_h(t)-C_h(s))\Lambda_h^{-(\beta-1)/2}}_{\mathcal{L}(U)}&\leq C\cdot\abs{t-s}^{\beta-1},&&\quad\beta\in[1,2],
\end{aligned}
\end{equation}
together with its continuous version:
\begin{equation}\label{41cls}
\begin{aligned}
\norm{(S(t)-S(s))\Lambda^{-\beta/2}}_{\mathcal{L}(U)}&\leq C\cdot\abs{t-s}^\beta,&&\quad\beta\in[0,1],\\
\norm{(C(t)-C(s))\Lambda^{-(\beta-1)/2}}_{\mathcal{L}(U)}&\leq C\cdot\abs{t-s}^{\beta-1},&&\quad\beta\in[1,2].
\end{aligned}
\end{equation}

\noindent $\bullet$ The equivalence of $\Lambda_h$ and $\Lambda$, 
see the proof of Theorem~4.4 in \cite{kll}: This uses an inverse inequality, 
hence our assumption about the quasi-uniformity of the mesh family.
\begin{equation}
\norm{\Lambda_h^\alpha\mathcal{P}_h\Lambda^{-\alpha}v}^2\leq \norm{v}^2, 
\ \ \alpha\in[-\tfrac12,1], \ \ v\in \dot{H}^0 = L_2(\mathcal{D}).
\label{lpl}
\end{equation}

\noindent $\bullet$ The equivalence of the discrete and continuous norm, see $(2.13)$ in \cite{al13}: 
\begin{align}\label{equivnorm}
c\norm{\Lambda_h^\gamma v_h}\leq\norm{\Lambda^\gamma v_h}
\leq C\norm{\Lambda_h^\gamma v_h}\quad\text{for}\quad v_h\in V_h\quad \text{and} \quad\gamma\in[-\tfrac12,\tfrac12].
\end{align}

Using the above estimates, one can deduce the following regularity results for 
the exact solution to our stochastic wave equation \eqref{swe} and 
for the exact solution of the finite element approximation \eqref{femswe1}.
\begin{proposition}\label{prop:regEx}
Let $[u_1,u_2]^T$ be the solution to \eqref{swe}, where the initial values satisfy $u_0\in L_2(\Omega,\dot{H}^{\gamma})$, 
$v_0\in L_2(\Omega,\dot{H}^{\gamma-1})$ with $\gamma=\max(\beta,1)$, and the functions $f$ and $g$ satisfy \eqref{assFG} for some $\beta\geq0$. 
Then it holds that
\begin{align*}
\sup_{0\leq t\leq T}\E[\norm{u_1(t)}_{\beta}^2+\norm{u_2(t)}_{\beta-1}^2]\leq C 
\end{align*}
and, for $0\leq s\leq t\leq T$, 
\begin{align*}
\E[\norm{u_1(t)-u_1(s)}^2]&\leq C\abs{t-s}^{2\min(\beta,1)}
\Bigl(\E[\norm{u_0}_\beta^2+\norm{v_0}_{\beta-1}^2]
\\ &\quad+\sup_{r\in[0,T]}\E[1+\norm{u_1(r)}_{\beta}^2]\Bigr).
\end{align*}
\end{proposition}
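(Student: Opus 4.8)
The plan is to work throughout with the system form $X=[u_1,u_2]^{T}$, since the first claimed bound is exactly $\sup_{0\le t\le T}\E[\tnorm{X(t)}_\beta^2]\le C$, and $u_1(t)-u_1(s)$ is the first component of $X(t)-X(s)$. Two structural facts drive everything. First, the group $E(\tau)$ given by \eqref{E} is bounded (indeed unitary) on $H^\beta$ uniformly in $\tau$, and the same holds on Hilbert--Schmidt operators into $H^\beta$, because $C(\tau),S(\tau)$ commute with powers of $\Lambda$ and have $\mathcal{L}(U)$-norm at most one. Second, the growth conditions \eqref{assFG} translate, for every $\beta\ge0$, into
\begin{equation*}
\tnorm{F(X)}_\beta+\norm{G(X)}_{\mathcal{L}_2^0(U,H^\beta)}\le C(1+\tnorm{X}_\beta),
\end{equation*}
since the first components of $F,G$ vanish, $\tnorm{F(X)}_\beta=\norm{f(u_1)}_{\beta-1}$, $\norm{G(X)}_{\mathcal{L}_2^0(U,H^\beta)}=\norm{\Lambda^{(\beta-1)/2}g(u_1)}_{\mathcal{L}_2^0}$, and lower-order norms are dominated by higher-order ones because $\Lambda$ is bounded below. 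I would record both facts as a preliminary lemma.

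For the first estimate I would insert the mild form \eqref{exactsol} into $\tnorm{\cdot}_\beta$ and split into the three terms. The initial term is controlled by $\tnorm{E(t)X_0}_\beta=\tnorm{X_0}_\beta$, whose square has bounded expectation because $\gamma=\max(\beta,1)\ge\beta$. For the drift term I would pull the norm inside the integral (Bochner) and use the uniform bound on $E$ together with the growth bound above, then Cauchy--Schwarz in $s$; for the diffusion term I would use the It\^o isometry and the same growth bound. This yields
\begin{equation*}
\E[\tnorm{X(t)}_\beta^2]\le C\Big(1+\int_0^t\E[\tnorm{X(s)}_\beta^2]\,\dd s\Big),
\end{equation*}
and Gr\"onwall's lemma closes the estimate. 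The only delicate point here is that Gr\"onwall presupposes $s\mapsto\E[\tnorm{X(s)}_\beta^2]$ is finite; this is handled in the standard way by first localising with stopping times $\tau_N=\inf\{t:\tnorm{X(t)}_\beta>N\}$ and letting $N\to\infty$ by monotone convergence.

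For the temporal H\"older bound set $m:=\min(\beta,1)\in[0,1]$, subtract the two mild representations, and treat the first component of each of the three resulting differences. The initial contribution is $(C(t)-C(s))u_0+\Lambda^{-1/2}(S(t)-S(s))v_0$; writing $\Lambda^{-1/2}=\Lambda^{-m/2}\Lambda^{(m-1)/2}$ and applying the continuous H\"older estimates \eqref{41cls} with exponent $m$ gives a bound $C\abs{t-s}^{m}(\norm{u_0}_\beta+\norm{v_0}_{\beta-1})$, using $\norm{u_0}_m\le C\norm{u_0}_\beta$ and $\norm{v_0}_{m-1}\le C\norm{v_0}_{\beta-1}$. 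Each convolution difference I would split as $\int_s^t E(t-r)(\cdot)+\int_0^s(E(t-r)-E(s-r))(\cdot)$. The $\int_0^s$-pieces carry the difference $S(t-r)-S(s-r)$ of equal increment $t-s$, so \eqref{41cls} applied to $(S(t-r)-S(s-r))\Lambda^{-m/2}$ again produces the factor $\abs{t-s}^{m}$, and the remaining $\Lambda^{(m-1)/2}$ is absorbed by the growth bounds on $f,g$; the deterministic $\int_s^t$-piece is immediately $\bigo{\abs{t-s}}\le\bigo{\abs{t-s}^{m}}$.

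The main obstacle is the stochastic $\int_s^t$-piece $\int_s^t\Lambda^{-1/2}S(t-r)g(u_1(r))\,\dd W(r)$, for which the It\^o isometry with the crude bound $\norm{\Lambda^{-1/2}S(t-r)}_{\mathcal{L}(U)}\le C$ gives only $\bigo{\abs{t-s}}$, which is insufficient once $2m>1$. I would instead exploit the smoothing of $\Lambda^{-1/2}S$: from the elementary spectral estimate $\abs{\sin x}\le\abs{x}^{m}$ for $m\in[0,1]$ one gets $\norm{\Lambda^{-m/2}S(\tau)}_{\mathcal{L}(U)}\le\tau^{m}$, hence
\begin{equation*}
\norm{\Lambda^{-1/2}S(t-r)g(u_1(r))}_{\mathcal{L}_2^0}\le(t-r)^{m}\norm{\Lambda^{(m-1)/2}g(u_1(r))}_{\mathcal{L}_2^0}\le C(t-r)^{m}\bigl(1+\norm{u_1(r)}_\beta\bigr).
\end{equation*}
The It\^o isometry then bounds this piece by $C\int_s^t(t-r)^{2m}\,\dd r\cdot\sup_r\E[(1+\norm{u_1(r)}_\beta)^2]=\bigo{\abs{t-s}^{2m+1}}\le\bigo{\abs{t-s}^{2m}}$. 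Collecting the three contributions, squaring and taking expectations gives the claimed bound with the two terms $\E[\norm{u_0}_\beta^2+\norm{v_0}_{\beta-1}^2]$ and $\sup_{r}\E[1+\norm{u_1(r)}_\beta^2]$, where the latter is finite by the first part of the proposition.
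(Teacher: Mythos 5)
Your proof is correct and follows essentially the same route as the paper, which omits the proof of this proposition and instead points to the detailed proof of its finite element analogue, Proposition~\ref{prop:regFEM}: the same mild-form decomposition into initial, drift and stochastic contributions, the H\"older estimates \eqref{41cls} for the sine/cosine operators combined with the growth bounds \eqref{assFG}, the It\^o isometry, and Gronwall's lemma. Your spectral bound $\norm{\Lambda^{-m/2}S(\tau)}_{\mathcal{L}(U)}\le \tau^{m}$ for the $\int_s^t$ stochastic piece is exactly the estimate the paper extracts from \eqref{o41cls} with $s=0$, so no genuinely new ingredient is introduced.
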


The proof of this proposition is very similar to the proof of Proposition~\ref{prop:regFEM} given 
below and is therefore omitted (see also the proofs of Proposition~3.1 and Lemma~3.3 in \cite{raey}). 

The next result will be useful in Section~\ref{sect:trace} when we will deal 
with the trace formula of the numerical solution. 

\begin{proposition}\label{prop:regFEM}
Let $[u_{h,1},u_{h,2}]^T$ be the solution to the finite element
problem \eqref{femswe1}, where the initial values  
satisfy $u_0\in L_2(\Omega,\dot{H}^{\gamma})$, 
$v_0\in L_2(\Omega,\dot{H}^{\gamma-1})$ with $\gamma=\max(\beta,1)$, 
and the functions $f$ and $g$ satisfy \eqref{assFG} for some $\beta\in[0,2]$. 
Then it holds that
\begin{align*}
\sup_{0\leq t\leq T}\E[\norm{u_{h,1}(t)}_{h,\beta}^2+\norm{u_{h,2}(t)}_{h,\beta-1}^2]\leq C 
\end{align*}
and for $0\leq s\leq t\leq T$ 
\begin{align*}
\E[\norm{u_{h,1}(t)-u_{h,1}(s)}^2]&\leq C\abs{t-s}^{2\min(\beta,1)}\Bigl(\E[\norm{u_{h,0}}_{h,\beta}^2
+\norm{v_{h,0}}_{h,\beta-1}^2]\\
&\quad+\sup_{r\in[0,T]}\E[1+\norm{u_{h,1}(r)}_{h,\beta}^2]\Bigr),
\end{align*}
where we recall that $u_{h,0}$ and $v_{h,0}$ are the initial position and velocity to the finite element problem. 
\end{proposition}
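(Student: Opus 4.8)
The plan is to bound the two claims using the mild finite element representation \eqref{exactsolfem} together with the explicit semigroup form \eqref{E_h} and the equivalence results \eqref{lpl}, \eqref{equivnorm}, mirroring the standard Sobolev-regularity argument for semilinear stochastic evolution equations. First I would address the uniform moment bound. Writing out the two components of $X_h(t)$ from \eqref{exactsolfem} and \eqref{E_h}, I would estimate $\E[\norm{u_{h,1}(t)}_{h,\beta}^2+\norm{u_{h,2}(t)}_{h,\beta-1}^2]$ term by term: the initial-data contribution is handled by applying the trigonometric operators $C_h,S_h$ (which are bounded on $V_h$ uniformly in $h$ because $\Lambda_h$ is self-adjoint positive definite) and by controlling the discrete norms of $u_{h,0}=\Rh u_0$ and $v_{h,0}=\Ph v_0$ via \eqref{lpl} and \eqref{equivnorm}. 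The deterministic convolution term is treated with the Cauchy--Schwarz inequality in time followed by the growth bound in \eqref{assFG}, and the stochastic convolution is estimated by the It\^o isometry, giving $\E[\norm{\int_0^tE_h(t-s)\Ph G(X_h(s))\,\dd W(s)}_{h,\beta}^2]=\int_0^t\E[\norm{\cdots}_{\mathcal{L}_2^0}^2]\,\dd s$, again bounded through \eqref{assFG} after commuting $\Lambda_h^{\beta/2}$ past $C_h,S_h$ and converting to $\Lambda$-norms. Collecting terms yields an integral inequality of the form $\phi(t)\le C+C\int_0^t\phi(s)\,\dd s$ where $\phi(t)=\E[\norm{u_{h,1}(t)}_{h,\beta}^2+\norm{u_{h,2}(t)}_{h,\beta-1}^2]$, and Gr\"onwall's lemma closes the uniform bound.

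Next I would turn to the temporal H\"older estimate. Using the semigroup property $E_h(t)=E_h(t-s)E_h(s)$, I would write the increment $u_{h,1}(t)-u_{h,1}(s)$ as a sum of three groups of terms: one coming from $(E_h(t)-E_h(s))X_{h,0}$, one from the difference of the drift integrals, and one from the difference of the stochastic integrals. The initial-data difference is where the H\"older exponent $\min(\beta,1)$ enters: the relevant blocks are $(C_h(t)-C_h(s))u_{h,0}$ and $(\Lambda_h^{-1/2}S_h(t)-\Lambda_h^{-1/2}S_h(s))v_{h,0}$, which I would control by inserting discrete powers of $\Lambda_h$ and invoking the discrete temporal H\"older continuity \eqref{o41cls}; this produces factors $\abs{t-s}^{\min(\beta,1)}$ multiplied by the discrete norms of the initial data. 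For the convolution differences I would split each into an ``overlap'' integral on $[0,s]$ with integrand $(E_h(t-r)-E_h(s-r))\Ph(\cdots)$ and a ``new'' integral on $[s,t]$; the overlap parts again use \eqref{o41cls} to extract $\abs{t-s}^{\min(\beta,1)}$, while the new parts are short integrals whose lengths contribute the remaining powers of $\abs{t-s}$, with the stochastic increment handled by the It\^o isometry. The growth bounds \eqref{assFG} and the already-established uniform moment bound let me absorb the integrands into the constant, leaving the stated estimate.

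The main obstacle I anticipate is the bookkeeping required to keep all norms discrete and $h$-independent while exploiting the H\"older bounds \eqref{o41cls}, which are stated for operators of the form $(S_h(t)-S_h(s))\Lambda_h^{-\beta/2}$ rather than in terms of the energy-type norms $\norm{\cdot}_{h,\beta}$. Concretely, to use \eqref{o41cls} I must arrange each increment so that exactly the right discrete power $\Lambda_h^{-\beta/2}$ or $\Lambda_h^{-(\beta-1)/2}$ sits next to the cosine/sine difference, and then transfer the remaining discrete power onto the data or the nonlinearity; the commutation is legitimate because $\Lambda_h$ commutes with $C_h$ and $S_h$. A secondary technical point is that \eqref{o41cls} restricts $\beta\in[0,1]$ for the sine part and $\beta\in[1,2]$ for the cosine part, which is precisely why the hypothesis limits $\beta\in[0,2]$ and why the H\"older exponent saturates at $\min(\beta,1)$; I would therefore treat the ranges $\beta\le1$ and $\beta>1$ separately when extracting the exponent. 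Once these operator-norm manipulations are set up correctly, the remaining estimates are routine applications of Cauchy--Schwarz, the It\^o isometry, and the growth condition \eqref{assFG}.
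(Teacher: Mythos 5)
Your proposal follows essentially the same route as the paper: the mild form \eqref{exactsolfem} estimated term by term with \eqref{lpl}, \eqref{equivnorm}, the growth/Lipschitz bounds \eqref{assFG}, the It\^o isometry and Gr\"onwall for the uniform moment bound, and then the overlap/new-interval splitting with the discrete H\"older estimates \eqref{o41cls} (treating $\beta\le1$ and $\beta>1$ separately) for the temporal regularity. The only loose point is your claim that the ``new'' integrals on $[s,t]$ get their powers of $\abs{t-s}$ from the interval length alone: for the stochastic integral this yields only $\abs{t-s}^{1/2}$, so for $\beta\in(\tfrac12,1]$ you must, as the paper does, also use $\norm{S_h(t-r)\Lambda_h^{-\beta/2}}_{\mathcal{L}(U)}\le C\abs{t-r}^{\beta}$ (i.e.\ \eqref{o41cls} with one argument equal to zero, exploiting $S_h(0)=0$) to reach the exponent $\min(\beta,1)$.
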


\begin{proof}
Let us start with the first estimate of the norm of $\Lambda_h^{\beta/2}u_{h,1}(t)$ and consider the expression 
\begin{align*}
\Lambda_h^{\beta/2}u_{h,1}(t)&=\Lambda_h^{\beta/2}C_h(t)u_{h,0}+\Lambda_h^{(\beta-1)/2}S_h(t)v_{h,0}\\
&\quad+\int_0^t\Lambda_h^{(\beta-1)/2}S_h(t-r)\Ph f(u_{h,1}(r))\,\dd r\\
&\quad+\int_0^t\Lambda_h^{(\beta-1)/2}S_h(t-r)\Ph g(u_{h,1}(r))\,\dd W(r).
\end{align*}
Using the fact that $\Lambda_h$ and $C_h(t)$ commute, the boundedness of the cosine operator, 
together with our assumptions on the initial values for the finite element problem, we get 
\begin{align*}
\E[\norm{\Lambda_h^{\beta/2}C_h(t)u_{h,0}}^2]\leq C\quad\text{for}\quad \beta\in[0,2].
\end{align*}
Similarly, one obtains
$$
\E[\norm{\Lambda_h^{(\beta-1)/2}S_h(t)v_{h,0}}^2]\leq C.
$$
To estimate the third term, we use \eqref{lpl}, 
the assumptions on $f$ given in \eqref{assFG}, and the equivalence of the norms 
stated in \eqref{equivnorm}. First for $\beta\in[0,1]$, we get
\begin{align*}
&\E\Big[\Norm{\int_0^t\Lambda_h^{(\beta-1)/2}S_h(t-r)\Ph f(u_{h,1}(r))\,\dd r}^2\Big]\\
&\quad\leq C_1+C_2\int_0^t\E[\norm{u_{h,1}(r)}^2]\,\dd r\\
&\quad\leq C_3+C_4\int_0^t\E[\norm{u_{h,1}(r)}_{h,\beta}^2]\,\dd r,  
\end{align*}
because $S_h(t)$ and $\Lambda_h^{-(1-\beta)/2}$ are bounded. For $\beta\in[1,2]$, we have by \eqref{lpl}
\begin{align*}
&\E\Big[\Norm{\int_0^tS_h(t-r)\Lambda_h^{(\beta-1)/2}\Ph\Lambda^{-(\beta-1)/2}\Lambda^{(\beta-1)/2}
f(u_{h,1}(r))\,\dd r}^2\Big]\\
&\quad\leq C\int_0^t \E[1+\norm{\Lambda^{(\beta-1)/2}u_{h,1}(r)}^2]\,\dd r \leq 
C_1+C_2\int_0^t\E[\norm{u_{h,1}(r)}_{h,\beta-1}^2]\,\dd r \\
&\quad\leq C_3+C_4\int_0^t\E[\norm{u_{h,1}(r)}_{h,\beta}^2]\,\dd r. 
\end{align*}
Finally, Ito's isometry, equations \eqref{equivnorm} and \eqref{lpl}, 
and the assumptions \eqref{assFG} on $g$ give us
\begin{align*}
\E\Big[\Norm{\int_0^t S_h(t-r)\Ph\Lambda^{-(\beta-1)/2}\Lambda^{(\beta-1)/2}
  g(u_{h,1}(r))\,\dd W(r)}^2\Big]
\leq 
C_3+C_4\int_0^t\E[\norm{u_{h,1}(r)}_{h,\beta}^2]\,\dd r. 
\end{align*}
All together, for $\beta\in[0,2]$, one thus obtains
$$
\E[\norm{u_{h,1}(t)}_{h,\beta}^2]\leq K_1+K_2\int_0^t\E[\norm{u_{h,1}(r)}_{h,\beta}^2]\,\dd r
$$
and an application of Gronwall's lemma give the desired bound for $\E[\norm{u_{h,1}(t)}_{h,\beta}^2]$. 

The proof for the other bound is done in the same way except for a slight difference in the initial values 
and that $\Lambda_h^{(\beta-1)/2}S_h(t-r)$ in the integrals is replaced by $\Lambda_h^{(\beta-1)/2}C_h(t-r)$.

We now prove a H\"older regularity property of the finite element solution. 
We write, for $0\leq s\leq t\leq T$,
\begin{align*}
u_{h,1}(t)-u_{h,1}(s)&=(C_h(t)-C_h(s))u_{h,0}+\Lambda_h^{-1/2}(S_h(t)-S_h(s))v_{h,0}\\
&\quad+\int_0^s\Lambda_h^{-1/2}(S_h(t-r)-S_h(s-r))\Ph f(u_{h,1}(r))\,\dd r\\
&\quad+\int_s^t\Lambda_h^{-1/2}S_h(t-r)\Ph f(u_{h,1}(r))\,\dd r\\
&\quad+\int_0^s\Lambda_h^{-1/2}(S_h(t-r)-S_h(s-r))\Ph g(u_{h,1}(r))\,\dd W(r)\\
&\quad+\int_s^t\Lambda_h^{-1/2}S_h(t-r)\Ph g(u_{h,1}(r))\,\dd W(r).
\end{align*}
To estimate the first term we use \eqref{o41cls} to get
\begin{align*}
\E[\norm{(C_h(t)-C_h(s))u_{h,0}}^2]&=\E[\norm{(C_h(t)-C_h(s))\Lambda_h^{-\beta/2}\Lambda_h^{\beta/2}u_{h,0}}^2]\\
&\leq C|t-s|^{2\beta}\E[\norm{\Lambda_h^{\beta/2}u_{h,0}}^2],
\end{align*}
for $\beta\in[0,1]$. For $\beta\in(1,2]$ we note that $\Lambda_h^{-\beta/2}=\Lambda_h^{-1/2}\Lambda_h^{-(\beta-1)/2}$ 
and that
$\Lambda_h^{-(\beta-1)/2}$ is bounded in the operator norm.
Using a similar argument for the second term, we get the following estimate for the first two terms
\begin{align*}
&\E[\norm{(C_h(t)-C_h(s))u_{h,0}+\Lambda_h^{-1/2}(S_h(t)-S_h(s))v_{h,0}}^2]\\
&\quad\leq C|t-s|^{2\min(\beta,1)}\E[\norm{u_{h,0}}_{h,\beta}^2
+\norm{v_{h,0}}_{h,\beta-1}^2],
\end{align*}
for $\beta\in[0,2]$.
In order to estimate the third term, we use \eqref{o41cls}, 
the assumptions on $f$, and the equivalence of the norms given 
in \eqref{equivnorm}. First for $\beta\in[0,1]$, we obtain 
\begin{align*}
&\E\Big[\Norm{\int_0^s\Lambda_h^{-1/2}(S_h(t-r)-S_h(s-r))\Ph f(u_{h,1}(r))\,\dd r}^2\Big]\\
&\qquad\leq C|t-s|^{2}\sup_{t\in[0,T]}\E[1+\norm{u_{h,1}(t)}^2]\\
&\qquad\leq C|t-s|^{2}\sup_{t\in[0,T]}\E[1+\norm{u_{h,1}(t)}_{h,\beta}^2].
\end{align*}
For $\beta\in[1,2]$ we have, using \eqref{o41cls}, \eqref{lpl}, \eqref{equivnorm} 
and the fact that $\Lambda_h^{-(\beta-1)/2}$ is bounded in the operator norm
\begin{align*}
&\E\Big[\Norm{\int_0^s\Lambda_h^{-1/2}(S_h(t-r)-S_h(s-r))\Ph f(u_{h,1}(r))\,\dd r}^2\Big]\\
&\qquad\leq\int_0^s\E[\norm{(S_h(t-r)-S_h(s-r))\Lambda_h^{-1/2}\Lambda_h^{-(\beta-1)/2}
\Lambda_h^{(\beta-1)/2}\Ph\Lambda^{-(\beta-1)/2}\\
&\qquad\qquad\times\Lambda^{(\beta-1)/2} f(u_{h,1}(r))}^2]\,\dd r\\
&\qquad\leq C|t-s|^2\sup_{t\in[0,T]}\E[1+\norm{u_{h,1}(t)}_{h,\beta-1}^2]\\
&\qquad\leq C|t-s|^2\sup_{t\in[0,T]}\E[1+\norm{u_{h,1}(t)}_{h,\beta}^2] .
\end{align*}
Similarly we get for the fourth term
\begin{align*}
&\E\Big[\Norm{\int_s^t\Lambda_h^{-1/2}S_h(t-r)\Ph f(u_{h,1}(r))\,\dd r}^2\Big]\\
&\qquad\leq C|t-s|^{2\min(\beta,1)}\sup_{t\in[0,T]}\E[1+\norm{u_{h,1}(t)}_{h,\beta}^2].
\end{align*}
To estimate terms five and six we use Ito's isometry, \eqref{o41cls}, \eqref{lpl}, \eqref{equivnorm} and 
the assumptions on $g$ to get, 
for $\beta\in[0,1]$,
\begin{align*}
&\E\Big[\Norm{\int_0^s\Lambda_h^{-1/2}(S_h(t-r)-S_h(s-r))\Ph g(u_{h,1}(r))\,\dd W(r)}^2\Big]\\
&\quad\leq \int_0^s\E[\norm{(S_h(t-r)-S_h(s-r))\Lambda_h^{-\beta/2}\Lambda_h^{(\beta-1)/2}\Ph\\
&\qquad\times g(u_{h,1}(r))}_{\mathcal{L}_2^0}^2]\,\dd r\\
&\quad\leq C|t-s|^{2\beta}\sup_{t\in[0,T]}\E[\norm{\Lambda_h^{(\beta-1)/2}g(u_{h,1}(t))}_{\mathcal{L}_2^0}^2]\\
&\quad\leq C|t-s|^{2\beta}\sup_{t\in[0,T]}\E[1+\norm{u_{h,1}(t)}_{h,\beta}^2]
\end{align*}
and
\begin{align*}
&\E\Big[\Norm{\int_s^t\Lambda_h^{-1/2}S_h(t-r)\Ph g(u_{h,1}(r))\,\dd W(r)}^2\Big]\\
&\quad\leq \int_s^t\E[\norm{S_h(t-r)\Lambda_h^{-\beta/2}\Lambda_h^{(\beta-1)/2}\Ph 
g(u_{h,1}(r))}_{\mathcal{L}_2^0}^2]\,\dd r\\
&\quad\leq C|t-s|^{2\beta}\sup_{t\in[0,T]}\E[\norm{\Lambda_h^{(\beta-1)/2}g(u_{h,1}(t))}_{\mathcal{L}_2^0}^2]\\
&\quad\leq C|t-s|^{2\beta}\sup_{t\in[0,T]}\E[1+\norm{u_{h,1}(t)}_{h,\beta}^2].
\end{align*}
For $\beta\in[1,2]$ we again use that $\Lambda_h^{-(\beta-1)/2}$ is bounded in the operator norm. 

Collecting the above estimates give us the statement about the regularity of the finite element solution.
\end{proof}

\section{Mean-square convergence analysis}\label{sect:ms}

Recall that the exact solutions to \eqref{swe2} and \eqref{femswe2} solve the following equations
\begin{align*}
X(t)&=E(t)X_0+\int_0^tE(t-s)F(X(s))\,\mathrm{d}s+\int_0^tE(t-s)G(X(s))\,\mathrm{d}W(s),\nonumber\\
X_h(t)&=E_h(t)X_{h,0}+\int_0^tE_h(t-s)\Ph F(X_h(s))\,\mathrm{d}s+
\int_0^tE_h(t-s)\Ph G(X_h(s))\,\mathrm{d}W(s),\nonumber
\end{align*}
where $X_0=
\begin{bmatrix}
  u_0\\v_0
\end{bmatrix}
$, 
$X_{h,0}=
\begin{bmatrix}
  \Rh u_0\\ \Ph v_0
\end{bmatrix}
$ and  
\begin{align*}
E(t)=\begin{bmatrix} C(t) & \Lambda^{-1/2}S(t) \\ -\Lambda^{1/2}S(t) & C(t) \end{bmatrix},
\quad E_h(t)=\begin{bmatrix} C_h(t) & \Lambda_h^{-1/2}S_h(t) \\ -\Lambda_h^{1/2}S_h(t) & C_h(t) \end{bmatrix},
\end{align*}
with $C(t)=\cos(t\Lambda^{1/2})$, $S(t)=\sin(t\Lambda^{1/2})$, $C_h(t)=\cos(t\Lambda_h^{1/2})$ 
and $S_h(t)=\sin(t\Lambda_h^{1/2})$.

The explicit time discretisation of the finite element solution \eqref{femswe2} of the stochastic wave equation 
using a stochastic trigonometric method with stepsize $k$ reads 
$$
U^{n+1}=E_h(k)U^n+E_h(k)\Ph F(U^n)k+E_h(k)\Ph G(U^n)\Delta W^n,
$$ 
that is, 
\begin{align}
\begin{bmatrix} U_1^{n+1} \\ U_2^{n+1} \end{bmatrix} &= 
\begin{bmatrix} C_h(k) & \Lambda_h^{-1/2}S_h(k) \\ -\Lambda_h^{1/2}S_h(k) & C_h(k) \end{bmatrix} 
\begin{bmatrix} U_1^n \\ U_{2}^n \end{bmatrix} 
+\begin{bmatrix} \Lambda_h^{-1/2}S_h(k) \\ C_h(k) \end{bmatrix}
\mathcal{P}_hf(U_1^n)k\nonumber\\
&\quad+\begin{bmatrix} \Lambda_h^{-1/2}S_h(k) \\ C_h(k) \end{bmatrix}
\mathcal{P}_hg(U_1^n)\Delta W^n,
\label{stm}
\end{align}
where $\Delta W^n=W(t_{n+1})-W(t_n)$ denotes the Wiener
increments. Here we thus get an approximation $U_j^n\approx
u_{h,j}(t_n)$ of the exact solution of our finite element problem 
at the discrete times $t_n=nk$. Further, a recursion gives 
$$
U^n=E_h(t_n)U^0+\sum_{j=0}^{n-1}E_h(t_n-t_j)\Ph F(U^j)\,k
+\sum_{j=0}^{n-1}E_h(t_n-t_j)\Ph G(U^j)\,\Delta W^j.
$$
We now look at the error between the numerical and the exact solutions $U^n-X(t_n)$. 
We follow the same approach as in \cite{y05} for parabolic problems, see also \cite{lt13}, and obtain
\begin{align*}
\E[\norm{U^n-X(t_n)}^2]\leq 3\bigl(\E[\norm{\text{Err}_{0}}^2]+\E[\norm{\text{Err}_{\text d}}^2]+\E[\norm{\text{Err}_{\text s}}^2]\bigr),
\end{align*}
where we define
\begin{align*}
\text{Err}_{0}:=(E_h(t_n)\Ph-E(t_n))X_0,
\end{align*}
\begin{align*}
\text{Err}_{\text d}:=\sum_{j=0}^{n-1}\int_{t_j}^{t_{j+1}}
\Bigl(E_h(t_n-t_j)\Ph F(U^j)-E(t_n-s)F(X(s))\Bigr)\,\text ds
\end{align*}
and
\begin{align*}
\text{Err}_{\text s}:=\sum_{j=0}^{n-1}\int_{t_j}^{t_{j+1}}
\Bigl(E_h(t_n-t_j)\Ph G(U^j)-E(t_n-s)G(X(s))\Bigr)\,\text dW(s).
\end{align*}
We next estimate the above three terms.

{\bf Estimate for the initial error $\text{Err}_{0}$}. By \eqref{initialerror1}, the first component reads
\begin{align*}
&\E[\norm{(C_h(t_n)\Rh-C(t_n))u_0+(\Lambda_h^{-1/2}S_h(t_n)\Ph-\Lambda^{-1/2}S(t_n))v_0}^2]\\
&\quad\leq C(1+t_n)^2h^{2(\gamma-1)}\left(\E[\norm{u_0}_\gamma+\norm{v_0}_{\gamma-1}]\right)^2,
\end{align*}
for $\gamma\in[1,3]$.
Similarly for the second component
\begin{align*}
&\E[\norm{-(\Lambda_h^{1/2}S_h(t_n)\Rh-\Lambda^{1/2}S(t_n))u_0+(C_h(t_n)\Ph-C(t_n))v_0}^2]\\
&\quad\leq C(1+t_n)^2h^{\frac{4}{3}(\gamma-1)}\left(\E[\norm{u_0}_\gamma+\norm{v_0}_{\gamma-1}]\right)^2,
\end{align*}
for $\gamma\in[1,4]$.

{\bf Estimate for the deterministic part, $\text{Err}_{\text d}$}. We write the deterministic error as
\begin{align*}
\text{Err}_{\text d}&=\sum_{j=0}^{n-1}\int_{t_j}^{t_{j+1}}
\Bigl(E_h(t_n-t_j)\Ph F(U^j)-E(t_n-s)F(X(s))\Bigr)\,\dd s\\
&=\sum_{j=0}^{n-1}\int_{t_j}^{t_{j+1}}E_h(t_n-t_j)\Ph(F(U^j)-F(X(t_j)))\,\dd s\\
&\quad+\sum_{j=0}^{n-1}\int_{t_j}^{t_{j+1}}E_h(t_n-t_j)\Ph(F(X(t_j))-F(X(s)))\,\dd s\\
&\quad+\sum_{j=0}^{n-1}\int_{t_j}^{t_{j+1}}
\bigl(E_h(t_n-t_j)\Ph-E(t_n-t_j)\bigr)F(X(s))\,\dd s\\
&\quad+\sum_{j=0}^{n-1}\int_{t_j}^{t_{j+1}}\bigl(E(t_n-t_j)-E(t_n-s)\bigr)F(X(s))\,\dd s\\
&=:I_1+I_2+I_3+I_4,
\end{align*}
and estimate the second moment of each term in the above equation.
For the first component of the first term we get the following
estimate by using \eqref{41cls} and \eqref{assFG}
\begin{align*}
\left(\E[\norm{I_{[1,1]}}^2]\right)^{1/2}&\leq\sum_{j=0}^{n-1}\int_{t_j}^{t_{j+1}}\left(\E[\norm{\Lambda_h^{-1/2}S_h(t_n-t_j)\Ph(f(U_1^j)-f(u_1(t_j)))}^2]\right)^{1/2}\,\dd s\\
&\leq C\sum_{j=0}^{n-1}k\left(\E[\norm{U_1^j-u(t_j)}^2]\right)^{1/2},
\end{align*}
so that 
\begin{align*}
\E[\norm{I_{[1,1]}}^2]\leq \left(Ck\sum_{j=0}^{n-1}\left(\E[\norm{U_1^j-u(t_j)}^2]\right)^{1/2}\right)^2
\leq Ck\sum_{j=0}^{n-1}\E[\norm{U_1^j-u(t_j)}^2].
\end{align*}
The second component is estimated in the same way
\begin{align*}
\left(\E[\norm{I_{[1,2]}}^2]\right)^{1/2}&\leq\sum_{j=0}^{n-1}\int_{t_j}^{t_{j+1}}\left(\E[\norm{C_h(t_n-t_j)\Ph(f(U_1^j)-f(u_1(t_j)))}^2]\right)^{1/2}\,\dd s\\
&\leq C\sum_{j=0}^{n-1}k\left(\E[\norm{U_1^j-u(t_j)}^2]\right)^{1/2}.
\end{align*}
For the second term, using Proposition~\ref{prop:regEx},  we get
\begin{align*}
&\left(\E[\norm{I_{[2,1]}}^2]\right)^{1/2}\\
&\leq\sum_{j=0}^{n-1}\int_{t_j}^{t_{j+1}}\left(\E[\norm{\Lambda_h^{-1/2}S_h(t_n-t_j)\Ph(f(u_1(t_j))-f(u_1(s)))}^2]\right)^{1/2}\,\dd s\\
&\leq C\sum_{j=0}^{n-1}\int_{t_j}^{t_{j+1}}\left(\E[\norm{u_1(t_j)-u_1(s)}^2]\right)^{1/2}\,\dd s\\
&\leq C\sum_{j=0}^{n-1}\int_{t_j}^{t_{j+1}}|t_j-s|^{\min(\beta,1)}\,\dd s\Bigl(\E[\norm{u_0}_\beta^2+\norm{v_0}_{\beta-1}^2]+\sup_{t\in[0,T]}\E[1+\norm{u_1(t)}_{\beta}^2]\Bigr)^{1/2}\\
&\leq Ck^{\min(\beta,1)},
\end{align*}
for $\beta\in[0,3]$. Thus
\begin{align*}
\E[\norm{I_{[2,1]}}^2]&\leq Ck^{2\min(\beta,1)}.
\end{align*}
The second component $I_{[2,2]}$ has the same expression as $I_{[1,2]}$ except that $\Lambda_h^{-1/2}S_h(t_n-t_j)$ is replaced by $C_h(t_n-t_j)$. The same estimate holds since the cosine operator is bounded.

The third term reads, using $\mathcal{K}_h(t)$ in \eqref{initialerror} and $\beta\in[1,3]$,
\begin{align*}
\left(\E[\norm{I_{[3,1]}}^2]\right)^{1/2}&\leq\sum_{j=0}^{n-1}\int_{t_j}^{t_{j+1}}\left(\E[\norm{(\Lambda_h^{-1/2}S_h(t_n-t_j)\Ph-\Lambda^{-1/2}S(t_n-t_j))f(u_1(s))}^2]\right)^{1/2}\,\dd s\\
&=\sum_{j=0}^{n-1}\int_{t_j}^{t_{j+1}}\left(\E[\norm{\mathcal{K}_h(t_n-t_j)f(u_1(s))}^2]\right)^{1/2}\,\dd s\\
&\leq Ch^{\frac{2}{3}\beta}\sum_{j=0}^{n-1}\int_{t_j}^{t_{j+1}}\left(\E[\norm{\Lambda^{(\beta-1)/2}f(u_1(s))}^2\right)^{1/2}\,\dd s\\
&\leq Ch^{\frac{2}{3}\beta}\left(\sup_{t\in[0,T]}\E[1+\norm{u_1(t)}_{\beta-1}^2]\right)^{1/2}\\
&\leq Ch^{\frac{2}{3}\beta}.
\end{align*}
For $\beta\in[0,1]$ we simply note that
$$\E[\norm{\Lambda^{(\beta-1)/2}f(u_1(s))}^2]\leq C\E[\norm{f(u_1(s))}^2]\leq C.$$
The estimate for the second component is done in a similar way using
now $\dot{\mathcal{K}}_h(t)$ in \eqref{initialerror} with $\beta\in[1,4]$,
$$\E[\norm{I_{[3,2]}}^2]\leq Ch^{\frac{4}{3}(\beta-1)},$$

For the fourth term with $\beta\in[0,3]$, using \eqref{41cls} and the
assumption on the function $f$ in \eqref{assFG}, we get  
\begin{align*}
\left(\E[\norm{I_{[4,1]}}^2]\right)^{1/2}&\leq\sum_{j=0}^{n-1}\int_{t_j}^{t_{j+1}}\left(\E[\norm{(S(t_n-t_j)-S(t_n-s))\Lambda^{-1/2}f(u_1(s))}^2]\right)^{1/2}\,\dd s\\
&\leq C\sum_{j=0}^{n-1}\int_{t_j}^{t_{j+1}}\left(\norm{(S(t_n-t_j)-S(t_n-s))\Lambda^{-1/2}}^2_{\mathcal{L}(U)}\E[\norm{f(u_1(s))}^2]\right)^{1/2}\,\dd s\\
&\leq C\sum_{j=0}^{n-1}\int_{t_j}^{t_{j+1}}\left(|s-t_j|^2\E[1+\norm{u_1(s)}^2]\right)^{1/2}\,\dd s\\
&\leq Ck.
\end{align*}
Thus we obtain 
\begin{align*}
\E[\norm{I_{[4,1]}}^2]&\leq Ck^{2}.
\end{align*}
For the second component we get
\begin{align*}
\left(\E[\norm{I_{[4,2]}}^2]\right)^{1/2}&\leq\sum_{j=0}^{n-1}\int_{t_j}^{t_{j+1}}\left(\E[\norm{(C(t_n-t_j)-C(t_n-s))f(u_1(s))}^2]\right)^{1/2}\,\dd s\\
&\leq C\sum_{j=0}^{n-1}\int_{t_j}^{t_{j+1}}\left(\norm{(C(t_n-t_j)-C(t_n-s))\Lambda^{-(\beta-1)/2}}^2_{\mathcal{L}(U)}\right.\\
&\qquad\times\left.\E[\norm{\Lambda^{(\beta-1)/2}f(u_1(s))}^2]\right)^{1/2}\,\dd s\\
&\leq C\sum_{j=0}^{n-1}\int_{t_j}^{t_{j+1}}\left(|s-t_j|^{2(\beta-1)}\E[1+\norm{u_1(s)}_{\beta-1}^2]\right)^{1/2}\,\dd s\\
&\leq Ck^{\min(\beta-1,1)},
\end{align*}
for $\beta\geq 1.$

Altogether we thus obtain
\begin{align*}
\E[\norm{\text{Err}_{\text d,1}}^2]&\leq 
C\cdot\bigl(h^{\frac{4\beta}3}+k^{2\min(\beta,1)}
+k\sum_{j=0}^{n-1}\E[\norm{U_1^j-u_1(t_j)}^2]\bigr)\qquad\text{for}\: \ \beta\in[0,3],\\
\E[\norm{\text{Err}_{\text d,2}}^2]&\leq 
C\cdot\bigl(h^{\frac{4(\beta-1)}3}+k^{2\min(\beta-1,1)}
+k\sum_{j=0}^{n-1}\E[\norm{U_1^j-u_1(t_j)}^2]\bigr)\qquad\text{for}\: \ \beta\in[1,4].
\end{align*}

{\bf Estimate for the stochastic part, $\text{Err}_{\text s}$}. We rewrite the stochastic part
as we did for the deterministic part of the error:
\begin{align*}
\text{Err}_{\text s}&=\sum_{j=0}^{n-1}\int_{t_j}^{t_{j+1}}
\Bigl(E_h(t_n-t_j)\Ph G(U^j)-E(t_n-s)G(X(s))\Bigr)\,\dd W(s)\\
&=\sum_{j=0}^{n-1}\int_{t_j}^{t_{j+1}}E_h(t_n-t_j)\Ph(G(U^j)-G(X(t_j)))\,\dd W(s)\\
&\quad+\sum_{j=0}^{n-1}\int_{t_j}^{t_{j+1}}E_h(t_n-t_j)\Ph(G(X(t_j))-G(X(s)))\,\dd W(s)\\
&\quad+\sum_{j=0}^{n-1}\int_{t_j}^{t_{j+1}}
\bigl(E_h(t_n-t_j)\Ph-E(t_n-t_j)\bigr)G(X(s))\,\dd W(s)\\
&\quad+\sum_{j=0}^{n-1}\int_{t_j}^{t_{j+1}}\bigl(E(t_n-t_j)-E(t_n-s)\bigr)G(X(s))\,\dd W(s)\\
&=:J_1+J_2+J_3+J_4.
\end{align*} 
The estimate for the first term follows by using the Ito isometry, 
the boundedness of $\Ph$, $S_h$ and $\Lambda_h^{-1/2}$, and  
the Lipschitz condition on the function $g$ in \eqref{assFG} 
\begin{align*}
\E[\norm{J_{[1,1]}}^2]&=\sum_{j=0}^{n-1}\int_{t_j}^{t_{j+1}}
\E[\norm{\Lambda_h^{-1/2}S_h(t_n-t_j)\Ph(g(U_1^j)-g(u_1(t_j)))}_{\mathcal{L}_2^0}^2]\,\dd s\\
&\leq Ck\sum_{j=0}^{n-1}\E[\norm{U_1^j-u_1(t_j)}^2]
\end{align*}
for $\beta\in[0,3]$. The same estimate holds for the second component $J_{[1,2]}$ with $\beta\in[1,4]$. 
For the first component of the second term, using Proposition~\ref{prop:regEx}, we obtain
\begin{align*}
\E[\norm{J_{[2,1]}}^2]&=\sum_{j=0}^{n-1}\int_{t_j}^{t_{j+1}}\E[\norm{\Lambda_h^{-1/2}S_h(t_n-t_j)\Ph(g(u_1(t_j))-g(u_1(s))}_{\mathcal{L}_2^0}^2]\,\dd s\\
&\leq C\sum_{j=0}^{n-1}\int_{t_j}^{t_{j+1}}\E[\norm{u_1(t_j)-u_1(s)}^2]\,\dd s\\
&\leq C\sum_{j=0}^{n-1}\int_{t_j}^{t_{j+1}}\abs{t_j-s}^{2\min(\beta,1)}\,\dd s
\Bigl(\E[\norm{u_0}_\beta^2+\norm{v_0}_{\beta-1}^2]\\
&\qquad+\sup_{t\in[0,T]}\E[1+\norm{u_1(t)}_{\beta}^2]\Bigr)\\
&\leq Ck^{2\min(\beta,1)},
\end{align*}
for $\beta\in[0,3]$. Similarly, the estimate for the second component of $J_2$ reads 
\begin{align*}
\E[\norm{J_{[2,2]}}^2]&=\sum_{j=0}^{n-1}\int_{t_j}^{t_{j+1}}
\E[\norm{C_h(t_n-t_j)\Ph(g(u_1(t_j))-g(u_1(s))}_{\mathcal{L}_2^0}^2]\,\dd s\\
&\leq Ck^{2}\Bigl(\E[\norm{u_0}_\beta^2+\norm{v_0}_{\beta-1}^2]+\sup_{t\in[0,T]}\E[1+\norm{u_1(t)}_{\beta}^2]\Bigr).
\end{align*}
For the second component we have $\beta\in[1,4]$, so that $\min(\beta,1)=1$.
For the first component of the third term we use \eqref{initialerror} with $\beta\in(1,3]$ to get
\begin{align*}
\E[\norm{J_{[3,1]}}^2]&=\sum_{j=0}^{n-1}\int_{t_j}^{t_{j+1}}\E[\norm{(\Lambda_h^{-1/2}S_h(t_n-t_j)\Ph-\Lambda^{-1/2}S(t_n-t_j))g(u_1(s))}_{\mathcal{L}_2^0}^2]\,\dd s\\
&\leq Ch^{\frac{4}{3}\beta}\sum_{j=0}^{n-1}\int_{t_j}^{t_{j+1}}\E[\norm{\Lambda^{(\beta-1)/2}g(u_1(s))}^2]\,\dd s\\
&\leq Ch^{\frac{4\beta}{3}}\sup_{t\in[0,T]}\E[1+\norm{u_1(t)}_{\beta}^2]
\leq Ch^{\frac{4\beta}{3}}
\end{align*}
by Proposition~\ref{prop:regEx}. 
The estimate for $\beta\in[0,1]$ is obtained in the same way. For the second component, we also obtain
\begin{align*}
\E[\norm{J_{[3,2]}}^2]&\leq Ch^{\frac{4(\beta-1)}3}\sup_{t\in[0,T]}\E[1+\norm{u_1(t)}_{\beta}^2]
\leq Ch^{\frac{4(\beta-1)}3}
\end{align*}
for $\beta\in[1,4]$. Finally, for the first component of the fourth term, we get
\begin{align*}
\E[\norm{J_{[4,1]}}^2]&=\sum_{j=0}^{n-1}\int_{t_j}^{t_{j+1}}\E[\norm{(S(t_n-t_j)-S(t_n-s))\Lambda^{-\beta/2}\Lambda^{(\beta-1)/2}g(u_1(s))}_{\mathcal{L}_2^0}^2]\,\dd s\\
&\leq C\sum_{j=0}^{n-1}\int_{t_j}^{t_{j+1}}|s-t_j|^{2\beta}\,\dd s\sup_{t\in[0,T]}\E[1+\norm{u_1(t)}^2]\\
&\leq Ck^{2\beta}\sup_{t\in[0,T]}\E[1+\norm{u_1(t)}^2],
\end{align*}
for $\beta\in[0,1]$. For $\beta>1$, we note that $\Lambda^{-\beta/2}=\Lambda^{-1/2}\Lambda^{-(\beta-1)/2}$ 
and that $\Lambda^{-(\beta-1)/2}$ is bounded so that we get
\begin{align*}
\E[\norm{J_{[4,1]}}^2]&\leq Ck^{2\min(\beta,1)}.
\end{align*}
Similarly for the second component, using the regularity of the cosine operator, we obtain
\begin{align*}
\E[\norm{J_{[4,2]}}^2]&\leq Ck^{2\min(\beta-1,1)},
\end{align*}
for $\beta\geq 1$.
Altogether the estimate for the stochastic error reads
\begin{align*}
\E[\norm{\text{Err}_{\text s,1}}^2]&\leq 
C\cdot\Big(h^{\frac{4\beta}3}+k^{2\min(\beta,1)}
+k\sum_{j=0}^{n-1}\E[\norm{U_1^j-u_1(t_j)}^2]\Big)\qquad\text{for}\: \ \beta\in[0,3],\\
\E[\norm{\text{Err}_{\text s,2}}^2]&\leq 
C\cdot\Big(h^{\frac{4(\beta-1)}3}+k^{2\min(\beta-1,1)}
+k\sum_{j=0}^{n-1}\E[\norm{U_1^j-u_1(t_j)}^2]\Big)\qquad\text{for}\: \ \beta\in[1,4].
\end{align*}

Collecting the estimates of the three parts of the error, we thus obtain the 
following estimate for the error in the 
position and velocity of the stochastic wave equation
\begin{align*}
\E[\norm{U_1^n-u_1(t_n)}^2]&\leq C\cdot\Big(h^{\frac{4\beta}3}+k^{2\min(\beta,1)}
+k\sum_{j=0}^{n-1}\E[\norm{U_1^j-u_1(t_j)}^2]\Big),\quad\beta\in[0,3],\\
\E[\norm{U_2^n-u_2(t_n)}^2]&\leq C\cdot\Big(h^{\frac{4}{3}(\beta-1)}+k^{2\min(\beta-1,1)}
+k\sum_{j=0}^{n-1}\E[\norm{U_1^j-u_1(t_j)}^2]\Big),\quad\beta\in[1,4].
\end{align*}
Using the above error bounds and an application of the discrete Gronwall 
lemma proves the following result for the mean-square errors 
of the full discretisation of the semi-linear stochastic 
wave equation with a multiplicative noise. We assume that $\gamma$ is
large enough so that the stochastic error dominates over the initial error.
\begin{theorem}\label{th:ms}
Consider the numerical discretisation of the semi-linear stochastic wave 
equation with a multiplicative noise \eqref{swe} on a compact time interval $[0,T],$ $T>0,$ 
by a linear finite element method in space and 
the stochastic trigonometric method \eqref{stm} in time. 
Assume that $u_0\in L_2(\Omega,\dot{H}^\gamma)$, $v_0\in
L_2(\Omega,\dot{H}^{\gamma-1})$ with $\gamma\ge1+2\beta/3$ and that the functions $f$ and $g$ satisfy 
\eqref{assFG} for some $\beta\geq0$ for the error in the position (and for some $\beta\geq1$ for the error 
in the velocity). Then, for $t_n\in[0,T]$, the mean-square errors read 
\begin{align*}
\norm{U_1^n-u_{h,1}(t_n)}_{L_2(\Omega,\dot{H}^0)}&\leq C\cdot k^{\min(\beta,1)}
\quad\:\text{for}\:\: \ \beta\in[0,2],\\
\norm{U_2^n-u_{h,2}(t_n)}_{L_2(\Omega,\dot{H}^0)}&\leq C\cdot k^{\min(\beta-1,1)}
\quad\:\text{for}\:\: \ \beta\in[1,2],\\
\norm{U_1^n-u_1(t_n)}_{L_2(\Omega,\dot{H}^0)}&\leq C\cdot\bigl(h^{\frac{2\beta}3}+k^{\min(\beta,1)}\bigr)
\quad\:\text{for}\:\: \ \beta\in[0,3],\\
\norm{U_2^n-u_2(t_n)}_{L_2(\Omega,\dot{H}^0)}&\leq C\cdot\bigl(h^{\frac{2(\beta-1)}3}+k^{\min(\beta-1,1)}\bigr)
\quad\:\text{for}\:\: \ \beta\in[1,4].
\end{align*}
\end{theorem}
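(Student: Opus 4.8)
The plan is to close, by the discrete Gronwall lemma, the two recursive inequalities assembled immediately above the statement, and then to obtain the sharper full-versus-finite-element estimates by rerunning the same decomposition with the exact solution $X$ replaced by the finite element solution $X_h$.

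First I would treat the errors against the exact solution, i.e.\ the third and fourth estimates. The collected bound for the position has the form
\begin{align*}
\E[\norm{U_1^n-u_1(t_n)}^2]\leq C\Big(h^{\frac{4\beta}{3}}+k^{2\min(\beta,1)}+k\sum_{j=0}^{n-1}\E[\norm{U_1^j-u_1(t_j)}^2]\Big),
\end{align*}
which is a genuine recursion in the position error alone. Since $nk\leq T$, the non-recursive part is a uniform bound, so discrete Gronwall yields
\begin{align*}
\E[\norm{U_1^n-u_1(t_n)}^2]\leq C\,\e^{CT}\bigl(h^{\frac{4\beta}{3}}+k^{2\min(\beta,1)}\bigr),\qquad\beta\in[0,3],
\end{align*}
and taking square roots gives the third estimate. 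Here the hypothesis $\gamma\ge1+\tfrac{2\beta}{3}$ enters: it guarantees $h^{\gamma-1}\le h^{2\beta/3}$, so that the initial error $\text{Err}_0$, which by \eqref{initialerror1} is of order $h^{\gamma-1}$ in the $L_2(\Omega)$-norm, is dominated by the convolution error of order $h^{2\beta/3}$ and may be absorbed into the constant. For the velocity I would simply substitute the just-proved position bound into the collected estimate for $\E[\norm{U_2^n-u_2(t_n)}^2]$; the recursive sum then becomes $k\sum_{j=0}^{n-1}\E[\norm{U_1^j-u_1(t_j)}^2]\le T\,C(h^{4\beta/3}+k^{2\min(\beta,1)})$, which for $h,k\le1$ is dominated by $h^{4(\beta-1)/3}+k^{2\min(\beta-1,1)}$, giving the fourth estimate after taking square roots.

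Next I would establish the first two (purely temporal) estimates by comparing $U^n$ with $X_h(t_n)$ rather than with $X(t_n)$. Because $U^0=X_{h,0}$ and both solutions are propagated by the \emph{same} discrete semigroup $E_h$, the initial error vanishes and the two semigroup-difference contributions (the terms that in the exact case carried the spatial rates $h^{4\beta/3}$ via $\mathcal{K}_h,\dot{\mathcal{K}}_h$) do not appear at all; in particular there is no $h$-dependence. The surviving terms are the Lipschitz contributions, the solution-Hölder contributions — now estimated by the finite element regularity of Proposition~\ref{prop:regFEM} in place of Proposition~\ref{prop:regEx} — and the semigroup-Hölder contributions, estimated by the discrete bounds \eqref{o41cls}. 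Collecting these yields
\begin{align*}
\E[\norm{U_1^n-u_{h,1}(t_n)}^2]\leq C\Big(k^{2\min(\beta,1)}+k\sum_{j=0}^{n-1}\E[\norm{U_1^j-u_{h,1}(t_j)}^2]\Big),
\end{align*}
and discrete Gronwall gives the first estimate for $\beta\in[0,2]$, the range being exactly that in which Proposition~\ref{prop:regFEM} is available; the second estimate for the velocity then follows by substituting this bound into the corresponding velocity inequality, as in the exact case.

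The routine parts are the Gronwall applications and the passage from second moments to the $L_2(\Omega,\dot H^0)$-norms via square roots. The part needing the most care is the bookkeeping of the admissible ranges of $\beta$: each of the three error pieces is controlled only on a specific interval (for instance the convolution and initial estimates \eqref{initialerror1} and \eqref{initialerror} require $\beta$ up to $3$ or $4$, while Proposition~\ref{prop:regFEM} restricts to $\beta\in[0,2]$), and one must check that these intervals intersect as claimed and that in the velocity estimates the position rate is genuinely dominated. I do not expect any single step to be hard, but keeping the interplay of the spatial rate $h^{2\beta/3}$, the temporal rate $k^{\min(\beta,1)}$, and the regularity constraint $\gamma\ge1+2\beta/3$ consistent across all four bounds is the main obstacle.
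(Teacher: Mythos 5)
Your proposal is correct and follows essentially the same route as the paper: the paper's proof of the theorem consists precisely of applying the discrete Gronwall lemma to the assembled position recursion, substituting the resulting position bound into the velocity inequality (whose sum involves only position errors), absorbing the initial error via $\gamma\ge1+2\beta/3$, and then noting that the semidiscrete-versus-fully-discrete estimates are obtained by rerunning the same decomposition against $X_h$ with Proposition~\ref{prop:regFEM} in place of Proposition~\ref{prop:regEx}, which is exactly your plan. Your accounting of why the spatial terms and the initial error drop out in the first two estimates, and of the $\beta$-ranges, matches the paper's (brief) argument.
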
 

Observe that the error estimates between the finite element solutions and the solutions given by the 
stochastic trigonometric method are proven in a similar way as above, using in addition 
Proposition~\ref{prop:regFEM}. 

\section{A trace formula}\label{sect:trace}

In this section, we will only consider the problem \eqref{swe} 
with additive noise ($g\equiv1$ in \eqref{swe}) and 
the nonlinearity $f(u)=-V'(u)$ for a smooth potential $V$. 
We will further consider a trace-class $Q$-Wiener process $W$, 
i.\,e., $\mathrm{Tr}(Q)=\norm{Q^{1/2}}_{\mathrm{HS}}^2<\infty$. 
In this situation, the exact solution of our nonlinear stochastic wave equation 
satisfies a trace formula (see for example \cite{bc01,cls13} for linear 
stochastic wave equations), where, in analogy to deterministic problems, 
the ``Hamiltonian'' function is defined on $H^1=\dot{H}^1\times \dot{H}^0$ as
\begin{align*}
H(X)&=\frac12\int_\mathcal{D}(|u_2|^2+|\nabla u_1|^2)\,\dd x+\int_\mathcal{D} V(u_1)\,\dd x  
\\ &
= \frac12 \|u_{2}\|^2 + \frac12 \| \Lambda^{1/2} u_{1} \|^2 + \int_\mathcal{D}V(u_{1}) \,\dd x.     
\end{align*}
In this section, we restrict our attention to additive noises, since, 
in this case, we obtain an elegant and tractable expression 
for the drift term in the trace formulas (see below). This is not the case 
for the case of multiplicative noise as explained in 
a remark after the proof of the next proposition.

The trace formula for the exact solution to our stochastic wave equation is given in the following 
proposition.
\begin{proposition}\label{TrExact}
Consider the nonlinear stochastic wave equation \eqref{swe} with additive noise, that is with $g\equiv 1$. 
Further, let $f(u)=-V'(u)$ for a smooth potential $V$, let $\{W(t)\}_{t\geq 0}$ be 
a trace-class $Q$-Wiener process, 
and let the Hamiltonian $H$ be defined as above.
Then the exact solution, $X(t)$ in equation \eqref{exactsol}, of the nonlinear stochastic wave equation 
\eqref{swe}, satisfies the trace formula
\begin{align}\label{traEx}
\E[H(X(t))]=\E[H(X(0))]+t\frac{1}{2}\Tr(Q),\quad t\geq 0.
\end{align}
\end{proposition}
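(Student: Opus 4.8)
The plan is to apply an infinite-dimensional Itô formula to the functional $H$ along the mild solution $X(t)=[u_1(t),u_2(t)]^T$ and then take expectations. With $g\equiv1$ and $f=-V'$, the system \eqref{swe2} reads in components
\begin{align*}
\dd u_1 &= u_2\,\dt,\\
\dd u_2 &= \bigl(-\Lambda u_1+f(u_1)\bigr)\,\dt+\dd W,
\end{align*}
so that the noise enters only the velocity component and is additive. First I would record the relevant derivatives of $H(X)=\tfrac12\norm{u_2}^2+\tfrac12\norm{\Lambda^{1/2}u_1}^2+\int_\mathcal{D}V(u_1)\,\dd x$: the first derivative acts as $(\Lambda u_1+V'(u_1),\,\cdot\,)=(\Lambda u_1-f(u_1),\,\cdot\,)$ in the $u_1$-slot and as $(u_2,\,\cdot\,)$ in the $u_2$-slot, while the only second derivative that contributes to the correction term is $\partial^2_{u_2u_2}H=I$, since the noise acts solely on $u_2$.

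Inserting these into Itô's formula, the drift coming from $A$ and $F$ is
\begin{align*}
(\Lambda u_1-f(u_1),u_2)+(u_2,-\Lambda u_1+f(u_1))=0,
\end{align*}
which is precisely the energy conservation of the underlying deterministic Hamiltonian system (symmetry of $\Lambda$ kills the first pair of terms, the nonlinearity kills the second). The Itô correction term is $\tfrac12\Tr\bigl(\partial^2_{u_2u_2}H\,Q\bigr)=\tfrac12\Tr(Q)$, because $\partial^2_{u_2u_2}H=I$ and the covariance of the additive forcing is $Q$. Hence one obtains
\begin{align*}
\dd H(X(t))=\tfrac12\Tr(Q)\,\dt+(u_2(t),\dd W(t)),
\end{align*}
and taking expectations the stochastic integral is a mean-zero martingale, whose integrand is square-integrable by the regularity bound in Proposition~\ref{prop:regEx} (applied with $\beta\ge1$, so that $u_1\in\dot H^1$ and $u_2\in\dot H^0$). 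This leaves $\E[H(X(t))]=\E[H(X(0))]+\tfrac{t}{2}\Tr(Q)$, as claimed.

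The main obstacle is rigour rather than computation: $X(t)$ is only a mild solution and $H$ is defined on $H^1=\dot H^1\times\dot H^0$, so the individual terms $(\Lambda u_1,u_2)$ are not well defined at this regularity and the infinite-dimensional Itô formula cannot be applied verbatim. I would circumvent this by a spectral Galerkin approximation: project \eqref{swe2} onto the span of the first $N$ eigenfunctions of $\Lambda$, where the equation becomes a finite-dimensional SDE with locally Lipschitz, smooth coefficients so that the classical Itô formula applies, the drift cancels exactly, and one gets $\E[H_N(X^N(t))]=\E[H_N(X^N(0))]+\tfrac{t}{2}\Tr(Q_N)$ with $Q_N$ the projected covariance. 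One then passes to the limit $N\to\infty$, using the uniform moment bounds of Proposition~\ref{prop:regEx} to obtain $H_N(X^N)\to H(X)$ in $L_1(\Omega)$ and the trace-class assumption on $Q$ to obtain $\Tr(Q_N)\to\Tr(Q)<\infty$. Controlling the convergence of the potential term $\int_\mathcal{D}V(u_1^N)\,\dd x$ (via the growth of $V'=-f$ and the $\dot H^1$-regularity of $u_1$) is the step that requires the most care.
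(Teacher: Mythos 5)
Your proposal is correct and follows essentially the same route as the paper: apply the infinite-dimensional It\^o formula to $H$, observe that the Hamiltonian structure makes the drift term $(\Lambda u_1 - f(u_1), u_2) + (u_2, -\Lambda u_1 + f(u_1))$ vanish, identify the It\^o correction as $\tfrac12\Tr(Q)$ since the noise only enters the $u_2$-slot where $\partial^2_{u_2u_2}H=I$, and take expectations to kill the martingale part. The only difference is that the paper justifies the use of It\^o's formula directly by citing Theorem~4.17 of \cite{DaPrato1992}, whereas you propose an additional spectral Galerkin regularisation and passage to the limit; that extra care is reasonable but not needed for agreement with the paper's argument.
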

\begin{proof}
Indeed, using Ito's formula (one can apply Theorem~4.17 in \cite{DaPrato1992} 
since $X(t)$ is an Ito process and the potential $V$ is smooth enough) 
for the above Hamiltonian, we obtain
\begin{align*}
H(X(t))&=H(X(0))+\int_0^t( H'(X(s)),G\,\dd W(s))+\int_0^t(H'(X(s)),AX+F(X))\,\dd s\\
&\quad+\frac12\int_0^t\Tr[H''(X(s)) (GQ^{1/2})(GQ^{1/2})^*]\,\dd s
\end{align*}
for all time $t$. Here we have $G=\begin{bmatrix}0\\I\end{bmatrix}$, since we are concerned with additive noise. 
The expected value of the second term in the above formula 
is seen to be zero. Using the definition of $A$ and of the nonlinearity $F$, the integrand 
present in the third term reads
$$
(\Lambda u_1,u_2)+(V'(u_1),u_2)+(u_2,-\Lambda u_1-V'(u_1))=0.
$$
Finally, using the above definition of $G$ and the fact that the operator $Q$ is self-adjoint, 
the last term in the above formula is seen to be equal to 
$$
\frac12\int_0^t\Tr(Q^{1/2}(Q^{1/2})^*)\,\dd s=t\frac12\Tr(Q).
$$
This shows the trace formula \eqref{traEx} for the exact solution of our problem.
\end{proof}

\begin{remark}
Similarly to the above computations, for the case of multiplicative noise, one would obtain 
\begin{align*}
H(X(t)) & = H(X(0)) + \int_0^t( H'(X(s)),G(X(s))\,\dd W(s))+\int_0^t(H'(X(s)),AX+F(X))\,\dd s\\
&\quad + \frac12\int_0^t \Tr[H''(X(s)) (G(X(s))Q^{1/2})(G(X(s))Q^{1/2})^*]\,\dd s
\\ & =
H(X(0)) + \int_0^t( H'(X(s)),G(X(s))\,\dd W(s)) 
+
\frac12 \int_0^t \Tr( g(u_1(s)) Q^{1/2} ( g(u_1(s)) Q^{1/2})^*)\,\dd s.
\end{align*}
Taking expectation thus leads to
\begin{align*}
\E[H(X(t))] = \E[H(X(0))]
+
\frac12 \int_0^t \E [ \Tr( g(u_1(s)) Q ( g(u_1(s)) )^*) ]\,\dd s,
\end{align*}
which, for general multiplicative noise, do not give a tractable expression of the drift in the trace formula.
\end{remark}

\begin{remark}
The trace formula is also related to the energy equation, a tool that can be used to analyse 
the existence, or nonexistence, of solutions to stochastic nonlinear wave equations, 
see \cite{chow02} for further details on this topic.
\end{remark}

We next observe that, for the finite element solution $X_h$, one has 
$$
H(X_h) = \frac12 \|u_{h,2}\|^2 + \frac12 \| \Lambda_h^{1/2} u_{h,1} \|^2 + \int_\mathcal{D} V(u_{h,1}) \,\dd x,   
$$
because $\| \nabla v_h \|=\| \Lambda^{1/2} v_h \|=\| \Lambda_h^{1/2} v_h \|$ for 
finite element functions $v_h$. This results from the definitions of 
$\Lambda^{1/2}$ and $\Lambda_h^{1/2}$, see Section~\ref{sect:not}.
Using similar arguments as in the proof of the above result, one can now
show that the finite element solution $X_h(t)$, 
defined in \eqref{exactsolfem}, also possesses a trace formula.
\begin{proposition}\label{TrFEM}
Let $f$, $g$ and $W$ be as in Proposition~\ref{TrExact}. 
The solution of the finite element approximation of problem \eqref{swe}, 
$X_h(t)$ in equation \eqref{exactsolfem}, satisfies the trace formula 
\begin{align}\label{traFEM}
\E[H(X_h(t))]=\E[H(X_h(0))]+t\frac{1}{2}\Tr(\Ph Q \Ph),\quad t\geq 0.
\end{align}
\end{proposition}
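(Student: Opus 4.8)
The plan is to mimic the proof of Proposition~\ref{TrExact} almost line by line, applying Ito's formula to $H(X_h(t))$ rather than to $H(X(t))$, and then tracking how the finite element projection $\Ph$ enters each of the three resulting terms. Since $X_h(t)$ solves the abstract Ito equation \eqref{femswe2} with drift $A_h X_h + \Ph F(X_h)$ and diffusion $\Ph G$ (where $G=[0,I]^T$ because we are in the additive case $g\equiv1$), Ito's formula gives
\begin{align*}
H(X_h(t)) &= H(X_h(0)) + \int_0^t (H'(X_h(s)),\Ph G\,\dd W(s)) \\
&\quad + \int_0^t (H'(X_h(s)), A_h X_h(s) + \Ph F(X_h(s)))\,\dd s \\
&\quad + \frac12 \int_0^t \Tr[H''(X_h(s)) (\Ph G Q^{1/2})(\Ph G Q^{1/2})^*]\,\dd s.
\end{align*}
As before, the expectation of the stochastic integral vanishes, so the work is to show the second integrand is identically zero and to evaluate the third.

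For the drift integrand I would compute $H'(X_h) = [\Lambda_h u_{h,1} + V'(u_{h,1}), u_{h,2}]^T$, using the observation made just before the statement that $\|\Lambda^{1/2} v_h\| = \|\Lambda_h^{1/2} v_h\|$ for $v_h \in V_h$, so that the gradient-energy part of $H$ is represented by $\Lambda_h$ on the finite element space. Pairing $H'(X_h)$ against $A_h X_h + \Ph F(X_h) = [u_{h,2}, -\Lambda_h u_{h,1} - \Ph V'(u_{h,1})]^T$ gives
$$
(\Lambda_h u_{h,1}, u_{h,2}) + (V'(u_{h,1}), u_{h,2}) + (u_{h,2}, -\Lambda_h u_{h,1} - \Ph V'(u_{h,1})) = 0,
$$
where the cancellation of the nonlinear pieces uses that $u_{h,2}\in V_h$ together with the self-adjointness and definition of $\Ph$, so $(u_{h,2},\Ph V'(u_{h,1})) = (u_{h,2},V'(u_{h,1}))$. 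This reproduces the deterministic energy conservation on the discrete level.

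The only genuinely new point is the trace term, and this is where I expect the main care is needed. With $G=[0,I]^T$ the diffusion operator is $\Ph G$, whose relevant block is just $\Ph$ acting on $U=\dot H^0$; since $H''$ restricted to the velocity component is the identity, the trace collapses to $\Tr[(\Ph Q^{1/2})(\Ph Q^{1/2})^*]$. Using self-adjointness of $\Ph$ and of $Q$, this equals $\Tr(\Ph Q \Ph)$, giving the drift $\tfrac12\Tr(\Ph Q\Ph)$ per unit time. Taking expectations of the whole identity and using the two facts just established then yields \eqref{traFEM}. The main obstacle, such as it is, is bookkeeping: correctly identifying which block of $H''$ survives and justifying the reduction of the Hilbert--Schmidt trace over $H_h$ to a trace over $U$ against $\Ph Q\Ph$, rather than any genuine analytic difficulty, since the regularity and integrability needed to apply Ito's formula are inherited from Proposition~\ref{prop:regFEM}.
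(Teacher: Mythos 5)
Your proposal is correct and is precisely the argument the paper intends: the paper proves Proposition~\ref{TrFEM} only by remarking that it follows ``using similar arguments'' as Proposition~\ref{TrExact}, i.e.\ Ito's formula applied to $H(X_h(t))$ with the drift pairing vanishing and the trace term collapsing to $\tfrac12\Tr(\Ph Q\Ph)$. Your explicit handling of the two points the paper leaves implicit — the cancellation $(u_{h,2},\Ph V'(u_{h,1}))=(u_{h,2},V'(u_{h,1}))$ for $u_{h,2}\in V_h$, and the reduction of the Hilbert--Schmidt trace to $\Tr(\Ph Q\Ph)$ via self-adjointness of $\Ph$ and $Q$ — is exactly the right bookkeeping.
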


We will now prove that the full discretisation of the stochastic wave equation, 
that is the numerical solution given by \eqref{stm}, satisfies an almost trace formula. 
Indeed, as seen in the theorem below, we get a small defect of 
size $\bigo{k^{\min(2(\beta-1),1)}}$.  However, due to the use of
Gronwall's inequality, the defect term is not uniform in time.  

\begin{theorem}\label{TrSTM}
Let $f$, $g$ and $W$ be as in Proposition~\ref{TrExact}~and~\ref{TrFEM}. 
Let further the assumptions in Theorem~\ref{th:ms} be fulfilled 
with $\beta\in[1,2]$.
Then the stochastic trigonometric method \eqref{stm} satisfies an almost trace formula
\begin{align}\label{traFEM-STM}
\E[H(U^n)]=\E[H(U^0)]+t_n\frac{1}{2}\Tr(\Ph Q \Ph)+\bigo{k^{\min(2(\beta-1),1)}}
\end{align}
for $0\leq t_n\leq T$ and $\beta\in[1,2]$. 
\end{theorem}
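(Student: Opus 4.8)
The plan is to track the evolution of the discrete Hamiltonian $H(U^n)$ along the numerical flow, mimicking the Itô-calculus argument of Proposition~\ref{TrExact} but replacing exact differentials with one-step increments of the scheme \eqref{stm}. First I would write $H(U^{n+1})-H(U^n)$ and split it into the three natural contributions: the kinetic part $\frac12(\norm{U_2^{n+1}}^2-\norm{U_2^n}^2)$, the potential-gradient part $\frac12(\norm{\Lambda_h^{1/2}U_1^{n+1}}^2-\norm{\Lambda_h^{1/2}U_1^n}^2)$, and the nonlinear-potential part $\int_{\mathcal D}(V(U_1^{n+1})-V(U_1^n))\,\dd x$. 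For the two quadratic parts I would insert the explicit entries of $E_h(k)$ from \eqref{stm} and exploit the exact trigonometric identity $C_h(k)^2+S_h(k)^2=I$ (equivalently, that $E_h(k)$ is a $\tnorm{\cdot}$-isometry on the linear part), so that the quadratic form of the \emph{linear} propagation is conserved exactly; the only surviving terms in the quadratic parts come from the cross terms between the linear propagation and the $F$- and $G$-increments, and from the pure increment squares.

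The key computation is to take expectations and isolate the deterministic drift. The stochastic increments $\Delta W^n$ are independent of $\mathcal F_{t_n}$ and mean zero, so every cross term that is linear in $\Delta W^n$ vanishes in expectation; the surviving $G$-contribution is the square of the noise increment, which by Itô isometry produces $\E[\norm{\Lambda_h^{-1/2}S_h(k)\Ph Q^{1/2}}_{\HS}^2+\norm{C_h(k)\Ph Q^{1/2}}_{\HS}^2]k$ at leading order. Here I would again use $C_h(k)^2+S_h(k)^2=I$ to collapse the two Hilbert--Schmidt terms into $\frac12\Tr(\Ph Q\Ph)\cdot k$ up to a correction of size $\bigo{k^2}$ per step (coming from replacing $\Lambda_h^{-1/2}S_h(k)$ by its leading behaviour and from the $k^2$ deterministic drift term $E_h(k)\Ph G\,\Delta W^n\cdot k$), and summing the $n=t_n/k$ steps gives the claimed main term $t_n\frac12\Tr(\Ph Q\Ph)$ plus an accumulated remainder.

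The main obstacle is controlling the nonlinear-potential increment $\int_{\mathcal D}(V(U_1^{n+1})-V(U_1^n))\,\dd x$ and matching it against the cross terms so that the deterministic drift cancels to the required order, exactly as the integrand $(\Lambda u_1,u_2)+(V'(u_1),u_2)-(u_2,\Lambda u_1+V'(u_1))=0$ cancels in the continuous proof. I would Taylor-expand $V$ to second order, $V(U_1^{n+1})=V(U_1^n)+V'(U_1^n)(U_1^{n+1}-U_1^n)+\frac12 V''(\xi)(U_1^{n+1}-U_1^n)^2$, and pair the first-order term with the velocity cross term; the discrete analogue of the continuous cancellation is only approximate because $U_1^{n+1}-U_1^n$ involves $C_h(k),S_h(k)$ rather than the identity, leaving a defect of order $k$ in $U_1^{n+1}-U_1^n$ that feeds a per-step error. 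Quantifying this requires the moment bounds of Proposition~\ref{prop:regFEM} (valid precisely for $\beta\in[1,2]$) together with the smoothness of $V$ (so that $V'$ and $V''$ grow at most polynomially and the relevant expectations are finite), and the stepsize powers are tracked via the Hölder estimates \eqref{o41cls}; this is what forces the restriction $\beta\in[1,2]$ and produces the per-step defect $\bigo{k^{1+\min(2(\beta-1),1)}}$.

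Summing over the $\bigo{k^{-1}}$ steps turns the per-step defect into the stated global defect $\bigo{k^{\min(2(\beta-1),1)}}$, and since the per-step error bounds depend on the uniform-in-time moment estimates through a Gronwall argument the remainder is not uniform in $T$, as remarked in the statement. To make the summation rigorous I would establish a one-step inequality of the form $\abs{\E[H(U^{n+1})]-\E[H(U^n)]-\frac{k}{2}\Tr(\Ph Q\Ph)}\le Ck^{1+\min(2(\beta-1),1)}$ with $C$ depending on $\sup_n\E[\norm{U_1^n}_{h,\beta}^2]$ and on $\norm{V''}$-type constants, then telescope; the boundedness of $\sup_n\E[\norm{U^n}^2]$ and of $\sup_n\E[\norm{\Lambda_h^{1/2}U_1^n}^2]$ needed to close the argument follows from the mean-square convergence of Theorem~\ref{th:ms} combined with the regularity of $X_h$ in Proposition~\ref{prop:regFEM}.
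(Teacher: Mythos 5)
Your route is genuinely different from the paper's. The paper never touches the one-step energy balance: it writes $\E[H(U^n)]=\E[H(U^n)-H(X_h(t_n))]+\E[H(X_h(t_n))]$, invokes Proposition~\ref{TrFEM} for the exact drift of the semidiscrete solution, and spends the whole proof showing $\E[H(U^n)-H(X_h(t_n))]=\bigo{k^{\min(2(\beta-1),1)}}$ by factoring each quadratic difference as $(U+u_h,U-u_h)$ and estimating $U^n-X_h(t_n)$ in the negative norms $\norm{\cdot}_{h,1-\beta}$ and $\norm{\cdot}_{h,2-\beta}$ against the boundedness of both solutions in the dual norms. Your telescoping one-step computation bypasses Proposition~\ref{TrFEM} and, for the quadratic parts, Theorem~\ref{th:ms}; since $F$ and $G$ act only on the second component and $E_h(k)$ is a $\tnorm{\cdot}_1$-isometry, the linear propagation contributes nothing and the It\^o correction is \emph{exactly} $\tfrac{k}{2}\Tr(\Ph Q\Ph)$ per step --- note that your Hilbert--Schmidt sum should read $\norm{S_h(k)\Ph Q^{1/2}}_{\HS}^2+\norm{C_h(k)\Ph Q^{1/2}}_{\HS}^2=\Tr(\Ph Q\Ph)$ (the gradient norm carries the weight $\Lambda_h^{1/2}$, which cancels the $\Lambda_h^{-1/2}$), so no $\bigo{k^2}$ correction arises there at all. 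This is an attractive and more self-contained strategy.

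There is, however, a gap at the rate-determining step. After $k\,\E[(U_2^n,\Ph f(U_1^n))]$ from the kinetic part cancels the leading part of $\E[(V'(U_1^n),U_1^{n+1}-U_1^n)]$, the surviving per-step residual is $\E[(V'(U_1^n),(C_h(k)-I)U_1^n)]+\E[(V'(U_1^n),(\Lambda_h^{-1/2}S_h(k)-kI)U_2^n)]+\bigo{k^2}$. Bounding these by operator norms against $\norm{U_1^n}_{h,\beta}$ and $\norm{U_2^n}_{h,\beta-1}$ alone --- which is what a direct application of \eqref{o41cls} gives --- yields only $\bigo{k^{\beta}}$ per step, hence $\bigo{k^{\beta-1}}$ globally, strictly weaker than the claimed $\bigo{k^{\min(2(\beta-1),1)}}$ for $\beta\in(1,2)$. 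To reach the stated exponent you must split the smoothing between the two factors, e.g.
\begin{equation*}
(V'(U_1^n),(C_h(k)-I)U_1^n)=\bigl(\Lambda_h^{(\beta-1)/2}\Ph V'(U_1^n),\,(C_h(k)-I)\Lambda_h^{-(2\beta-1)/2}\Lambda_h^{\beta/2}U_1^n\bigr),
\end{equation*}
bound the first factor via the third line of \eqref{assFG} together with \eqref{lpl} and \eqref{equivnorm}, and use $\norm{(C_h(k)-I)\Lambda_h^{-(2\beta-1)/2}}_{\mathcal{L}(U)}\leq Ck^{\min(2\beta-1,2)}$ (with the analogous estimate for the sinc-type operator in the second term). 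This negative-norm duality is precisely the mechanism behind the paper's proof and is what produces the factor $2(\beta-1)$; your sketch asserts the per-step defect $\bigo{k^{1+\min(2(\beta-1),1)}}$ but does not supply the argument that delivers it. With that step added (and with the boundedness of $V''$ implied by the Lipschitz assumption on $f=-V'$ to control the second-order Taylor remainder), your argument closes.
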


\begin{proof}
The proof uses similar techniques as the ones used to prove the mean-square error estimates 
for the numerical solution in Section~\ref{sect:ms}. 

To prove the almost trace formula \eqref{traFEM-STM}, we first add and subtract 
the expectation of the Hamiltonian for the finite element solution $X_h(t)$
\begin{align*}
\E[H(U^n)]&=\E[H(U^n)-H(X_h(t_n))]+\E[H(X_h(t_n))]\\
&=\E[H(U^n)-H(X_h(t_n))]+\E[H(X_h(0))]+t_n\frac{1}{2}\Tr(\Ph Q\Ph)
\end{align*}
using Proposition~\ref{TrFEM}. We will next show that
\begin{align}\label{Trace}
\E[H(U^n)-H(X_h(t_n))]=\bigo{k^{\min(2(\beta-1),1)}}
\end{align}
for $\beta\in[1,2]$.
Indeed, we have that 
\begin{align}
\E[H(U^n)-H(X_h(t_n))]&=\E\left[\frac{1}{2}\int_\mathcal{D}(|U_2^n|^2-|u_{h,2}(t_n)|^2)\,\dd x\right.\nonumber\\
&\quad\left.+\frac{1}{2}\int_\mathcal{D}(|\Lambda_h^{1/2}U_1^n|^2-|\Lambda_h^{1/2}u_{h,1}(t_n)|^2)\,\dd x\right.\nonumber\\
&\quad+\left.\int_\mathcal{D}(V(U_1^n)-V(u_{h,1}(t_n)))\,\dd x\right].\label{H3}
\end{align}
Thus we get three terms to estimate. Using Cauchy-Schwarz inequality, 
the first term in the above equation can be estimated by (neglecting 
the factor $\frac12$ for ease of presentation) 
\begin{align*}
\left|\E[\norm{U_2^n}^2-\norm{u_{h,2}(t_n)}^2]\right|&=
\left|\E[(U_2^n+u_{h,2}(t_n),U_2^n-u_{h,2}(t_n))]\right|\\
&\leq \bigl(\E[\norm{U_2^n+u_{h,2}(t_n)}_{h,\beta-1}^2]\bigr)^{1/2} 
\bigl(\E[\norm{U_2^n-u_{h,2}(t_n)}_{h,1-\beta}^2]\bigr)^{1/2}\\
&\leq C\,\bigl(\E[\norm{\Lambda_h^{(1-\beta)/2}(U_2^n-u_{h,2}(t_n))}^2]\bigr)^{1/2},
\end{align*}
where we have used the discrete norm, the fact that the finite element solution $u_{h,2}(t)$ is bounded 
in the mean-square sense (see Proposition~\ref{prop:regFEM}), and the fact that the numerical solution 
given by the stochastic trigonometric method is also bounded, i.\,e. 
\begin{align*}\label{boundTrigo}
\E[\norm{U_1^n}_{h,\beta}^2+\norm{U_2^n}_{h,\beta-1}^2]\leq C<\infty\quad\text{for}\quad n=0,1,\ldots,N-1.
\end{align*}
The proof of these estimates is similar to the one for the finite element solution given in Proposition~\ref{prop:regFEM} 
except that we now have a sum of integrals of length $k$. This causes no problem since we can simply use the triangle inequality for the deterministic integrals and for the stochastic integrals we use the property that they are independent with expected value $0$.

Using the definition of the time integrator and similar techniques as in the proof of the mean-square convergence, 
one next estimates 
\begin{align*}
\Lambda_h^{(1-\beta)/2}(U_2^n-u_{h,2}(t_n))&=
\sum_{j=0}^{n-1}\int_{t_j}^{t_{j+1}}\Lambda_h^{(1-\beta)/2}(C_h(t_n-t_j)-C_h(t_n-s))\Ph\,\dd W(s)\\
&\quad+\sum_{j=0}^{n-1}\int_{t_j}^{t_{j+1}}\Lambda_h^{(1-\beta)/2}C_h(t_n-t_j)\Ph(f(U_1^j)-f(u_{h,1}(t_j)))\,\dd s\\
&\quad+\sum_{j=0}^{n-1}\int_{t_j}^{t_{j+1}}\Lambda_h^{(1-\beta)/2}C_h(t_n-t_j)\Ph(f(u_{h,1}(t_j))-f(u_{h,1}(s)))\,\dd s\\
&\quad+\sum_{j=0}^{n-1}\int_{t_j}^{t_{j+1}}\Lambda_h^{(1-\beta)/2}(C_h(t_n-t_j)-C_h(t_n-s))\Ph f(u_{h,1}(s))\,\dd s\\
&=:J_1+J_2+J_3+J_4.
\end{align*}
Using the temporal regularity of the cosine operator, see \eqref{o41cls}, 
equation \eqref{lpl}, and the assumptions \eqref{assFG} (recall that $g=1$ here), 
one gets
\begin{align*}
\E[\norm{J_1}^2]&=\sum_{j=0}^{n-1}\int_{t_j}^{t_{j+1}}
\norm{\Lambda_h^{(1-\beta)}(C_h(t_n-t_j)-C_h(t_n-s))\Lambda_h^{(\beta-1)/2}\Ph}^2_{\mathcal{L}_2^0}\,\dd s\\
&\leq C\sum_{j=0}^{n-1}\int_{t_j}^{t_{j+1}}
\norm{\Lambda_h^{(1-\beta)}(C_h(t_n-t_j)-C_h(t_n-s))\Lambda_h^{(\beta-1)/2}
\Ph\Lambda^{-(\beta-1)/2}\\
&\qquad\times\Lambda^{(\beta-1)/2}Q^{1/2}}^2_{\text{HS}}\,\dd s\\
&\leq C k^{2\min{(2(\beta-1),1})}\quad\text{for}\quad\beta\in[1,2]. 
\end{align*}
Next, using the convergence results from Theorem~\ref{th:ms} and the Lipschitz assumption on $f$, we observe that 
\begin{align*}
\bigl(\E[\norm{J_2}^2]\bigr)^{1/2}&\leq\sum_{j=0}^{n-1}\int_{t_j}^{t_{j+1}}\bigl(\E[\norm{\Lambda_h^{(1-\beta)/2}C_h(t_n-t_j)\Ph(f(U_1^j)-f(u_{h,1}(t_j)))}^2]\bigr)^{1/2}\,\dd s\\
&\leq C k\sum_{j=0}^{n-1} \bigl(\E[\norm{U_1^j-u_{h,1}(t_j)}^2]\bigr)^{1/2}\leq C k^{\min(\beta,1)}\quad\text{for}\quad\beta\in[1,2]. 
\end{align*}
Similarly, using the assumptions on $f$ given in \eqref{assFG}, 
and the regularity property of the finite element solution stated in Proposition~\ref{prop:regFEM}, one gets
\begin{align*}
\bigl(\E[\norm{J_3}^2]\bigr)^{1/2}&\leq\sum_{j=0}^{n-1}\int_{t_j}^{t_{j+1}}
\bigl(\E[\norm{\Lambda_h^{(1-\beta)/2}C_h(t_n-t_j)\Ph(f(u_{h,1}(t_j))-f(u_{h,1}(s)))}^2]\bigr)^{1/2}\,\dd s\\
&\leq C k^{\min(\beta,1)}\quad\text{for}\quad\beta\in[1,2]. 
\end{align*}
For the last term, $J_4$, we obtain the estimate for $\beta\in[1,2]$ as follows
\begin{align*}
\bigl(\E[\norm{J_4}^2]\bigr)^{1/2}&\leq\sum_{j=0}^{n-1}\int_{t_j}^{t_{j+1}}
\bigl(\E[\norm{\Lambda_h^{(1-\beta)}(C_h(t_n-t_j)-C_h(t_n-s))\\
&\qquad\times\Lambda_h^{(\beta-1)/2}\Ph f(u_{h,1}(s))}^2]\bigr)^{1/2}\,\dd s \\
&\leq C k^{\min(2(\beta-1),1)},
\end{align*}
where we have used equation \eqref{o41cls}, the equivalence between the norms \eqref{equivnorm}, 
the assumptions on the nonlinearity $f$, and the fact that the finite element solution $u_{h,1}$ 
is bounded in the norm $\norm{\cdot}_{h,\beta-1}$. 

Collecting all the above estimates and observing that $2(\beta-1)\leq\beta$ for $\beta\in[1,2]$, we finally get 
\begin{align*}
\left|\E[\norm{U_2^n}^2-\norm{u_{h,2}(t_n)}^2]\right| \leq C k^{\min{(2(\beta-1),1)}}\quad\text{for}\quad\beta\in[1,2].
\end{align*}
The second term in \eqref{H3} can be estimated in a similar way as above. We have
\begin{align*}
\left|\E[\norm{\Lambda_h^{1/2}U_1^n}^2-\norm{\Lambda_h^{1/2}u_{h,1}(t_n)}^2] \right| &\leq(\E[\norm{U_1^n+u_{h,1}(t_n)}_{h,\beta}^2])^{1/2}(\E[\norm{U_1^n-u_{h,1}(t_n)}_{h,2-\beta}^2])^{1/2}\\
&\leq C(\E[\norm{\Lambda_h^{(2-\beta)/2}(U_1^n-u_{1,h}(t_n))}^2])^{1/2}\\
&\leq C k^{\min{(2(\beta-1),1)}}\quad\text{for}\quad\beta\in[1,2].
\end{align*}
When estimating $\E[\norm{\Lambda_h^{(2-\beta)/2}(U_1^n-u_{1,h}(t_n))}^2]$ we get the same terms as $J_1$ through $J_4$ above, 
except that cosine is replaced by sine everywhere. Hence the same estimate holds.
For the third and final term in \eqref{H3}, using the mean value theorem we get first 
\begin{align*}
\E[\norm{V(U_1^n)-V(u_{h,1}(t_n))}_{L_1(\mathcal{D})}]&\leq C\E[\norm{V(U_1^n)-V(u_{h,1}(t_n))}_{L_2(\mathcal{D})}]\\
&\leq C\norm{V'(\xi)(U_1^n-u_{h,1}(t_n))}_{L_2(\Omega,\dot H^0)}.
\end{align*}
Recalling that $f(u)=-V'(u)$, using H\"older's inequality, 
using the fact the numerical solutions are bounded in the mean-square sense, 
and the error bounds stated in Theorem~\ref{th:ms}, we next estimate the following expression
\begin{align*}
\E[\norm{V(U_1^n)-V(u_{h,1}(t_n))}_{L_1(\mathcal{D})}]&\leq 
C\norm{V'(\xi)(U_1^n-u_{h,1}(t_n))}_{L_2(\Omega,\dot H^0)}\\
&\leq C \left(\E[\norm{U_1^n-u_{h,1}(t_n)}_{L_2(\mathcal{D})}^2]\right)^{1/2}\leq C k^{\min(\beta,1)}.
\end{align*}
Putting all these estimates together 
we obtain equation \eqref{Trace} and the theorem is proven.
\end{proof}


\section{Numerical experiments}\label{sect:num}
This section illustrates numerically the main results of the paper. 
We first present the time integrators we will consider, 
then test their mean-square orders of convergence on various problems 
and finally illustrate their behaviours with respect to the trace formula from the previous section.
\subsection{Setting}
The solution of our stochastic wave equation \eqref{swe} will now be 
numerically approximated using the method of lines, i.\,e., 
with a linear finite element method in space and then with 
various time integrators (see below). Further, we will consider two kinds of noise: 
a space-time white noise with covariance operator $Q=I$ and a correlated one 
with $Q=\Lambda^{-s}$ for some $s>0$. We refer for example to \cite{cls13} for 
a discussion on the approximation of the noise. 
 
We shall compare the stochastic trigonometric method \eqref{stm} with the following 
classical numerical schemes for stochastic differential equations. 
When applied to the wave equation in the form \eqref{swe2}, these numerical integrators are: 
\begin{enumerate}
\item The forward Euler-Maruyama scheme, see for example \cite{kp92} or \cite{mt04},  
$$
X^{n+1}=X^n+kAX^{n}+kF(X^{n})+G(X^n)\Delta W^n.
$$ 
\item The semi-implicit Euler-Maruyama scheme, see for example \cite{h03} or \cite{wal05},  
$$
X^{n+1}=X^n+kAX^{n+1}+kF(X^{n})+G(X^n)\Delta W^n.
$$
\item The backward Euler-Maruyama scheme, see for example \cite{kp92} or \cite{mt04},  
$$
X^{n+1}=X^n+kAX^{n+1}+kF(X^{n+1})+G(X^n)\Delta W^n.
$$ 
\item The semi-implicit Crank-Nicolson-Maruyama scheme, \cite{h03} or \cite{wal05}, 
$$
X^{n+1}=X^{n}+\frac k2 A(X^{n+1}+X^{n})+kF(X^n)+G(X^n)\Delta W^n.
$$
\end{enumerate}
Note that the backward Euler-Maruyama scheme, the semi-implicit Euler-Maruyama scheme, and 
the semi-implicit Crank-Nicolson-Maruyama scheme are implicit numerical integrators. 

All the numerical experiments were performed in Matlab using specially designed software 
and the random numbers were generated with the command \verb+randn+. 

\subsection{Multiplicative noise}
Let us first consider the one-dimensional 
hyperbolic Anderson model \cite{cd08,dm09}:
\begin{equation*}
\begin{aligned}
&\mathrm{d}\dot{u}(x,t)-u_{xx}(x,t)\, 
\mathrm{d}t = u(x,t)\,\mathrm{d}W(x,t) &&\quad \mathrm{for}\quad (x,t) \in \ (0,1)\times(0,1), \\
&u(0,t)=u(1,t) = 0, &&\quad t\in(0,1), \\
&u(x,0) = \sin(2\pi x), \ \dot u(x,0)=\sin(3\pi x), &&\quad x \in(0,1).
\end{aligned}
\end{equation*}
This stochastic partial differential equation with multiplicative noise 
is now discretised in space by a linear finite element method with mesh size $h$. 
This leads to a system of stiff stochastic differential equations. 
The latter problem is then discretised in time by various integrators with time step $k$. 

Figure~\ref{fig:errorAndersonh} illustrates the results on the spatial discretisation 
of the finite element method as stated in Theorem~\ref{th:ms}.   
The spatial mean-square errors at time $T_{\mathrm{end}}=1$,
$$
\sqrt{\E\big[\norm{u_h(x,T_{\mathrm{end}})-u(x,T_{\mathrm{end}})}^2\big]},
$$ 
are displayed for various values of the parameter $h=2^{-\ell}$, $\ell=2,\ldots,9$. 
The covariance operator is chosen as $Q=\Lambda^{-s}$ for $s=0,1/2,1/3,1/4$. 
In the present situation, $f(u)=0$ and $g(u)=u$ satisfy the assumptions 
\eqref{assFG} with $\beta<s+\frac12$. This can be seen using the computations done in 
Subsection~4.1 from \cite{jr12} (with $\rho=2s$ and $\alpha=\frac{\beta-1}2)$. 
A clear dependence of the spatial convergence rates with respect 
to the covariance operator can be observed in this figure, in
agreement with Theorem~\ref{th:ms}.  
Here, we simulate the exact solution $u(x,t)$ with the numerical one  
using the stochastic trigonometric method (STM) \eqref{stm} with a small time step 
$k_{\mathrm{exact}}=2^{-9}$ (in order to neglect the error from the discretisation in time) 
and $h_{\mathrm{exact}}=2^{-9}$ for the mesh of the FEM. 
The expected values are approximated by computing averages 
over $M_{\mathrm{s}}=2500$ samples. We computed the estimate 
for the largest standard errors, of all schemes, to be $0.0026$. 
This shows that the error due to a Monte-Carlo approximation is negligible.

\begin{figure}
\begin{center}
\includegraphics*[height=6cm,keepaspectratio]{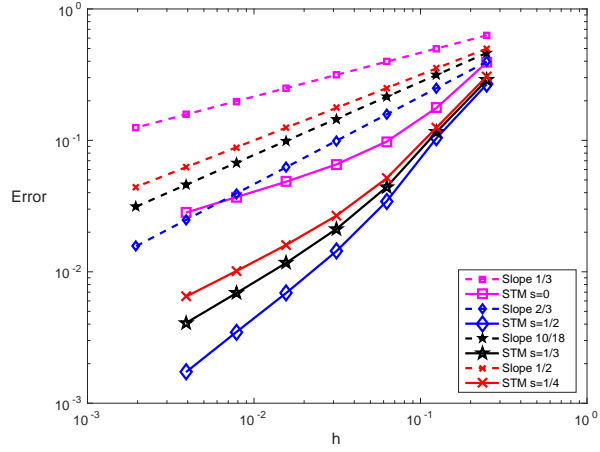}
\caption{The Anderson model: Spatial rates of convergence for the covariance operators 
$Q=\Lambda^{-s}$ with $s=0,1/2,1/3,1/4$. The dotted lines are reference 
lines of slopes $1/3,2/3,10/18,1/2$. $M_{\mathrm{s}}=2500$ samples.} 
\label{fig:errorAndersonh}
\end{center}
\end{figure}

We are now interested in the time discretisation of the above 
stochastic partial differential equation with space-time white noise 
($Q=I$ and thus $\beta<1/2$). We compute the temporal errors at time 
$T_{\mathrm{end}}=0.5$. In Figure~\ref{fig:errorAndersonkall}, 
one can observe the rates  
of mean-square convergence of various time integrators. 
The expected rate of convergence $\bigo{k^{1/2}}$ of the stochastic trigonometric 
method as stated in Theorem~\ref{th:ms} can be confirmed. Again, the exact solution 
is approximated by the stochastic trigonometric method 
with a very small time step $k_{\mathrm{exact}}=2^{-11}$ and uses  
$h_{\mathrm{exact}}=2^{-9}$ for the spatial discretisation. 
$M_{\mathrm{s}}=2500$ samples are used for the approximation of the expected values. 
We computed the estimate for the largest standard errors, of all schemes, to be $0.01$. 
The numerical results for the forward and backward Euler-Maruyama schemes are not 
displayed since these numerical schemes would have to use very small time steps for 
such an $h_{\mathrm{exact}}$ (see also Subsection~\ref{numexpTrace} below).

\begin{figure}
\begin{center}
\includegraphics*[height=6cm,keepaspectratio]{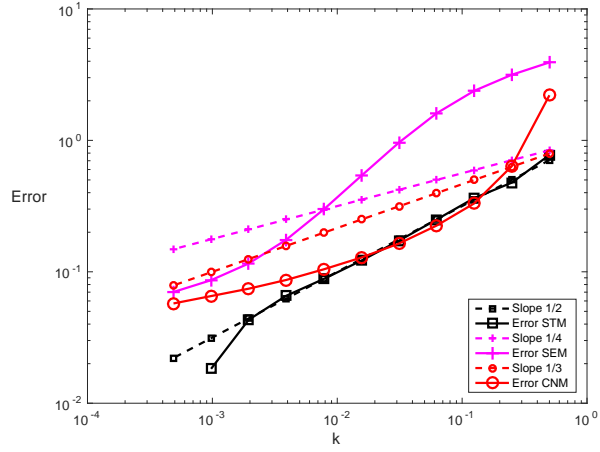}
\caption{The Anderson model (space-time white noise): Temporal rates of convergence of the stochastic trigonometric method (STM), 
the semi-implicit Euler-Maruyama scheme (SEM) and the Crank-Nicolson-Maruyama scheme (CNM). 
The reference lines have slopes $1/4,1/3$ and $1/2$. $M_{\mathrm{s}}=2500$ samples.}
\label{fig:errorAndersonkall}
\end{center}
\end{figure}

\subsection{Semi-linear problem with additive space-time white noise}
We next consider the sine-Gordon equation driven by additive space-time white noise 
($Q=I$ and thus $\beta<1/2$)
\begin{equation*}
\begin{aligned}
&\mathrm{d}\dot{u}(x,t) - u_{xx}(x,t)\, \mathrm{d}t = -\sin(u(x,t))\, 
\mathrm{d}t+\mathrm{d}W(x,t), &&\quad (x,t) \in \ (0,1)\times(0,0.5), \\
&u(0,t)=u(1,t) = 0, &&\quad t\in(0,0.5), \\
&u(x,0) = 0, \ \dot u(x,0)=1_{[\frac{1}{4},\frac{3}{4}]}(x), &&\quad x \in(0,1),
\end{aligned}
\end{equation*}
where $1_I (x)$ denotes the indicator function for the interval $I$. 

Figure~\ref{fig:errorSGall} displays the rates  
of mean-square convergence at $T_{\mathrm{end}}=0.5$ of various time integrators. 
The expected temporal rate of convergence $\bigo{k^{1/2}}$ of the stochastic trigonometric 
method as stated in Theorem~\ref{th:ms} can be confirmed. Again, the exact solution 
is approximated by the stochastic trigonometric method 
with a very small step size $k_{\mathrm{exact}}=2^{-11}$ and uses  
$h_{\mathrm{exact}}=2^{-9}$ for the spatial discretisation. 
$M_{\mathrm{s}}=2500$ samples are used for the approximation 
of the expected values. We computed the estimate 
for the largest standard errors for all schemes to be $0.0027$, showing 
that the error due to a Monte-Carlo approximation 
is negligible.

\begin{figure}
\begin{center}
\includegraphics*[height=6cm,keepaspectratio]{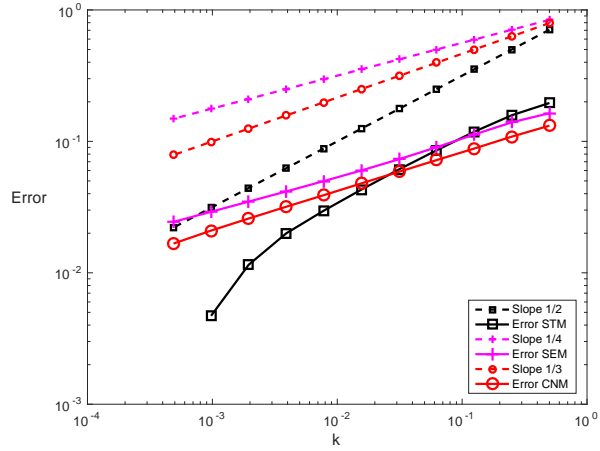}
\caption{The sine-Gordon equation (space-time white noise): Temporal rates of convergence 
of the stochastic trigonometric method (STM), 
the semi-implicit Euler-Maruyama scheme (SEM) and the Crank-Nicolson-Maruyama scheme (CNM). 
The dotted lines have slopes $1/4,1/3$ and $1/2$. 
$M_{\mathrm{s}}=2500$ samples.}
\label{fig:errorSGall}
\end{center}
\end{figure}

\subsection{Semi-linear equation with multiplicative noise}
In this subsection, we consider the sine-Gordon equation 
driven by a multiplicative space-time white noise ($Q=I$ and thus $\beta<1/2$)
\begin{equation*}
\begin{aligned}
&\mathrm{d}\dot{u}(x,t) - u_{xx}(x,t)\, \mathrm{d}t = 
-\sin(u(x,t))\, \mathrm{d}t+u(x,t)\,\mathrm{d}W(x,t), &&\quad (x,t) 
\in \ (0,1)\times(0,0.5), \\
&u(0,t)=u(1,t) = 0, &&\quad t\in(0,0.5), \\
&u(x,0) = \sin(2\pi x), \ \dot u(x,0)=\sin(3\pi x), &&\quad x \in(0,1).
\end{aligned}
\end{equation*}

Figure~\ref{fig:errorMSG} displays the rates of mean-square convergence 
of various time integrators when applied to this semi-linear problem with multiplicative noise. 
The expected temporal rate of convergence $\bigo{k^{1/2}}$ of the stochastic trigonometric 
method as stated in Theorem~\ref{th:ms} can be confirmed. 
One also observes a slower convergence 
rate for the other integrators. As before, a reference solution 
is computed by the stochastic trigonometric method 
with a very small step size $k_{\mathrm{exact}}=2^{-11}$ and uses  
$h_{\mathrm{exact}}=2^{-9}$ for the spatial discretisation. 
$M_{\mathrm{s}}=2500$ samples are used 
for the approximation of the expected values. We computed the estimate 
for the largest standard errors, of all schemes, to be $0.006$. 
This shows that the error due to a Monte-Carlo approximation is negligible. 

\begin{figure}
\begin{center}
\includegraphics*[height=6cm,keepaspectratio]{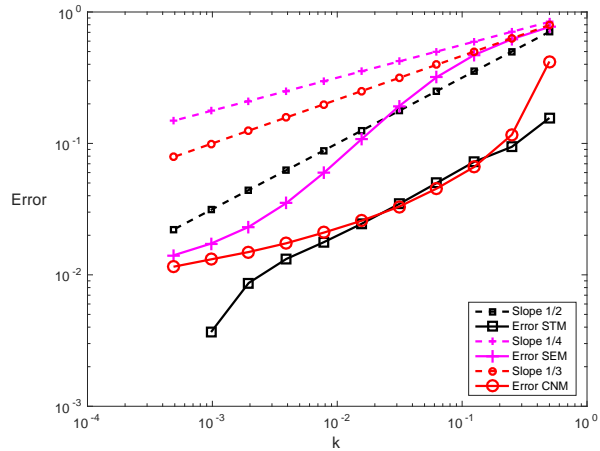}
\caption{The sine-Gordon equation with multiplicative space-time white noise: 
Temporal rates of convergence of the stochastic trigonometric method (STM), 
the semi-implicit Euler-Maruyama scheme (SEM) and the Crank-Nicolson-Maruyama scheme (CNM). 
The dotted lines have slopes $1/4,1/3$ and $1/2$. $M_{\mathrm{s}}=2500$ samples.}
\label{fig:errorMSG}
\end{center}
\end{figure}

\subsection{Trace formula}\label{numexpTrace}
We will now illustrate the trace formula from Section~\ref{sect:trace}. 
To do this, we again consider the above sine-Gordon equation with additive noise 
and solve this problem with a linear finite element method in space and in time 
we use the stochastic trigonometric method \eqref{stm} with $f(u)=-\sin(u)$, $g(u)=1$. 
Figure~\ref{fig:sine_gordon} (top) displays the expected value of 
the Hamiltonian along the numerical solutions of the above 
stochastic sine-Gordon equation where 
the covariance operator is given by $Q=\Lambda^{-2}$. In the present situation, 
the Lipschitz function $f(u)=-\sin(u)$ and the function $g(u)=1$ satisfy 
the assumptions \eqref{assFG} with $\beta=2$. This is seen using the fact that 
the eigenvalues of the Laplace operator with Dirichlet boundary condition satisfy 
$\lambda_j\sim j^2$ and the eigenvectors are given by $\{\sqrt{2}\sin(j\pi x)\}_j$.  
The meshes are $h=0.1$ and $k=0.01$, the time interval is $[0,5]$, 
and $M_{\mathrm{s}}=2500$ samples are used for the 
approximation of the expected values. For this experiment, 
the largest standard errors for all the numerical schemes  
(except for the Euler-Maruyama scheme)  
is of the size of $0.002$ confirming that the Monte-Carlo errors are negligible. 
In this figure, one can observe the unsatisfactory behaviour of classical Euler-Maruyama-type methods. 
This is not a big surprise, since, already for stochastic ordinary differential equations, 
the growth rate of the expected energy along solutions given by these numerical solutions 
is incorrect \cite{hig04,c10}. The Crank-Nicolson-Maruyama scheme however seems to reproduce 
very well the linear drift in the expected value of the Hamiltonian. 
Let us see what happens when one uses bigger time step and longer time interval. 
Figure~\ref{fig:sine_gordon} (bottom) displays the expected energies on the 
longer time interval $[0,250]$ for the Crank-Nicolson-Maruyama and the stochastic trigonometric methods 
with a larger time step $k=0.1$. The other parameters are the same as in the above numerical experiment, in particular, 
the Monte-Carlo error for the stochastic trigonometric method is negligible 
(error of size $0.01$). On this long-time interval, excellent behaviour of 
the stochastic trigonometric method \eqref{stm} is still observed although 
this does not follow from the result presented in Theorem~\ref{TrSTM}. 

\begin{figure}
\begin{center}
\includegraphics*[height=6cm,keepaspectratio]{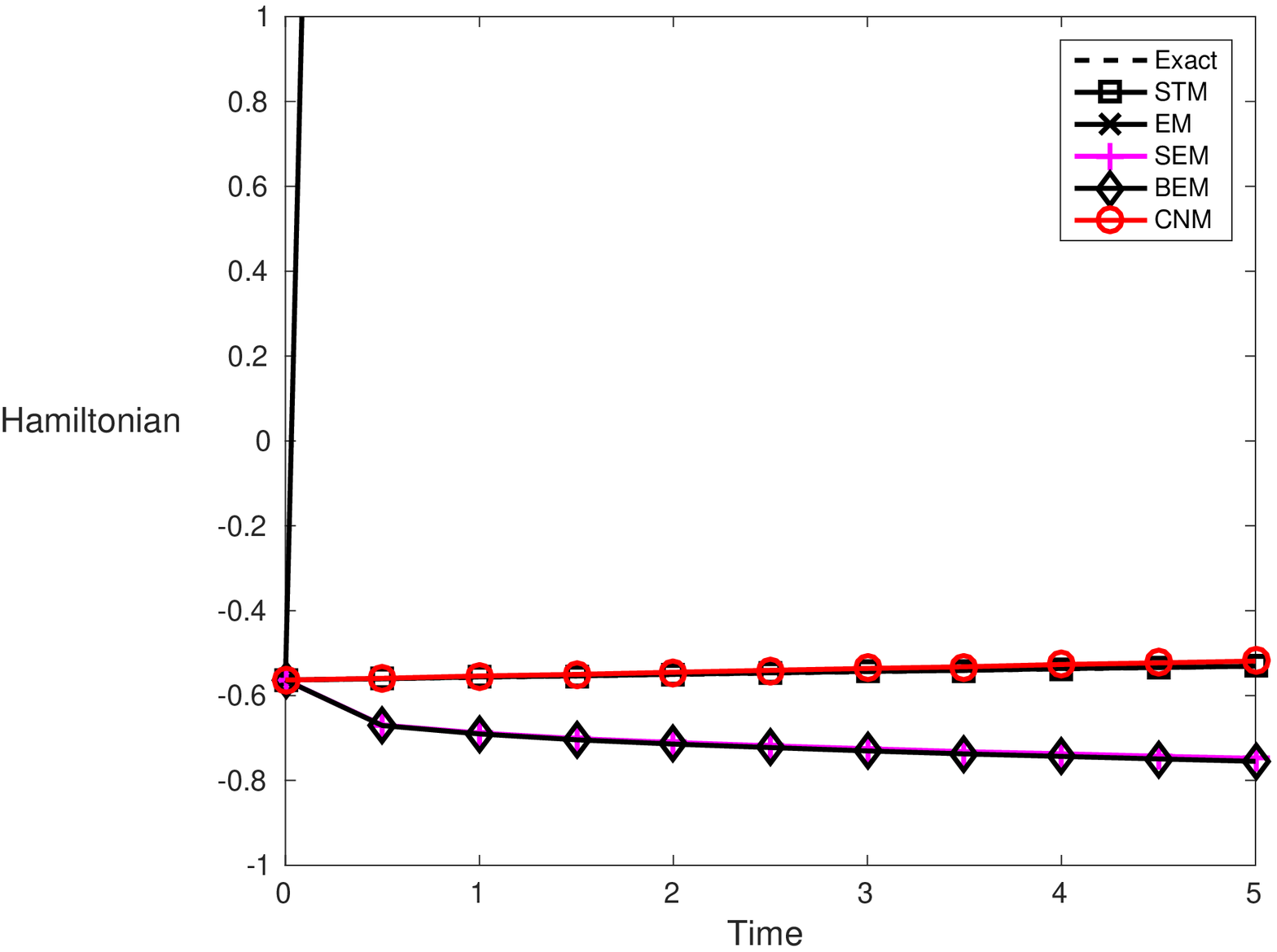}
\includegraphics*[height=6cm,keepaspectratio]{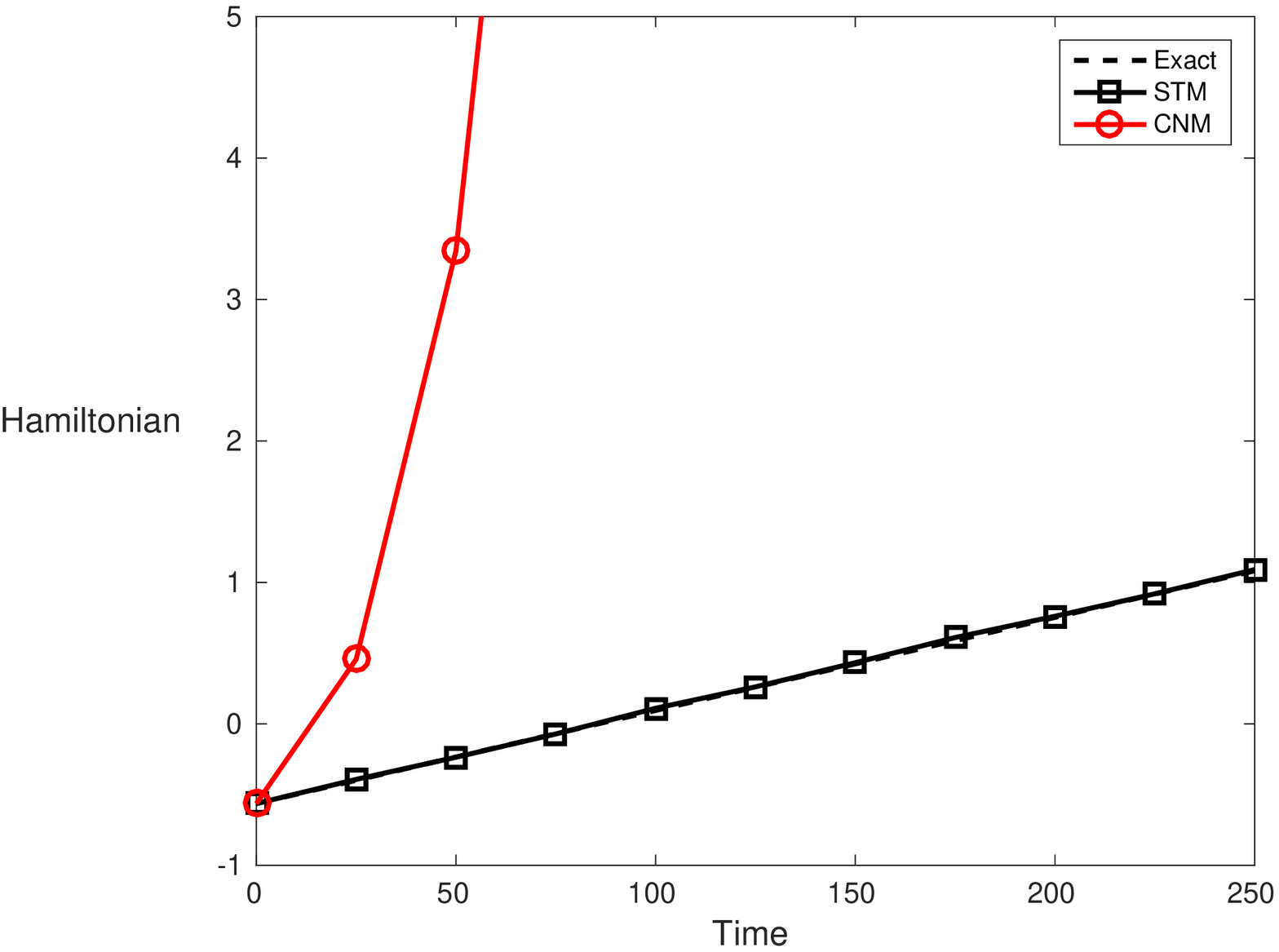}
\caption{Trace formula for the sine-Gordon equation: Expected values of the Hamiltonian along the numerical 
solutions given by the stochastic trigonometric method (STM), the forward Euler-Maruyama scheme (EM), 
the semi-implicit Euler-Maruyama scheme (SEM), the backward Euler-Maruyama scheme (BEM) and the 
Crank-Nicolson-Maruyama scheme (CNM). $M_{\mathrm{s}}=2500$ samples. 
Mesh size: $h=0.1$. Time intervals and time steps: $[0,5]$, $k=0.01$ (top) 
and $[0,250]$, $k=0.1$ (bottom).}
\label{fig:sine_gordon}
\end{center}
\end{figure}


\section{Appendix}
In order to improve the readability of the paper, we give some details for the proofs of 
the results given in Section~\ref{sect:not}. 
The proofs of \eqref{initialerror1} and \eqref{initialerror} can be
found in Corollary~4.2 in \cite{kls10}.  They are obtained by
interpolation between the results for the endpoints of the parameter
values.  These in turn are well-known estimates for the finite element
approximation of the homogeneous wave equation:
$ \ddot{u}_h+\Lambda_h u_h=0, \ t\ge0; \ u_h(0)=\Rh u_0,\
\dot{u}_h(0)=\Ph v_0$.

For example, for $\gamma=1$ we have, by a standard stability estimate,
\begin{align*}
\norm{\mathcal{G}_h(t)X_0}&\leq C\left(\norm{\Rh u_0}+\norm{\Ph v_0}_{-1,h}+\norm{u_0}+\norm{v_0}_{-1}\right)\\
&\leq C\left(\norm{u_0}_1+\norm{v_0}_{-1}\right)\le C h^0\tnorm{X_0}_1,
\end{align*}
since $\Rh$ is not bounded with respect to the $\dot{H}^0$-norm.  
For $\gamma=3$, we have
\begin{align*}
\norm{\mathcal{G}_h(t)X_0}
\leq C(1+t) h^2\left(\norm{u_0}_3+\norm{v_0}_2\right)
&\leq C(1+t)h^2\tnorm{X_0}_3,
\end{align*}
cf.\ the estimation of $F_h$ in the proof of Corollary~4.2 in \cite{kls10}. 
Interpolation between these two cases completes the proof of the first
bound in \eqref{initialerror1}.  Note that the required initial
regularity is one order higher than the order of convergence.  This is
typical of the finite element method for the wave equation.  Another
choice of projector, $u_h(0)=\Ph u_0$, would give a slightly better
result for low initial regularity here, but a worse result for
$\dot{\mathcal{G}}_h$.


\bibliographystyle{siam}
\bibliography{biblio}

\begin{thebibliography}{10}

\bibitem{al13}
{\sc A.~Andersson and S.~Larsson}, {\em Weak convergence for a spatial
  approximation of the nonlinear stochastic heat equation}, Math. Comp.,  (2015
  (electronic)).
\newblock http://dx.doi.org/10.1090/mcom/3016.

\bibitem{bc01}
{\sc B.~P. Belinskiy and P.~Caithamer}, {\em Energy of an elastic mechanical
  system driven by gaussian noise white in time}, Discrete Contin. Dynam.
  Systems,  (2001), pp.~39--49.
\newblock Dynamical systems and differential equations (Kennesaw, GA, 2000).

\bibitem{chow02}
{\sc P.-L. Chow}, {\em Stochastic wave equations with polynomial nonlinearity},
  Ann. Appl. Probab., 12 (2002), pp.~361--381.

\bibitem{c10}
{\sc D.~Cohen}, {\em On the numerical discretisation of stochastic
  oscillators}, Math. Comput. Simul., 82 (2012), pp.~1478--1495.
\newblock doi:10.1016/j.matcom.2012.02.004.

\bibitem{cls13}
{\sc D.~Cohen, S.~Larsson, and M.~Sigg}, {\em A trigonometric method for the
  linear stochastic wave equation}, SIAM J. Numer. Anal., 51 (2013),
  pp.~204--222.

\bibitem{cqs14}
{\sc D.~Cohen and L.~Quer-Sardanyons}, {\em A fully discrete approximation of
  the one-dimensional stochastic wave equation}, IMA Journal of Numerical
  Analysis,  (2015).
\newblock http://dx.doi.org/10.1093/imanum/drv006.

\bibitem{cd08}
{\sc D.~Conus and R.~C. Dalang}, {\em The non-linear stochastic wave equation
  in high dimensions}, Electron. J. Probab., 13 (2008), pp.~no. 22, 629--670.

\bibitem{DaPrato1992}
{\sc G.~Da~Prato and J.~Zabczyk}, {\em Stochastic Equations in Infinite
  Dimensions}, vol.~44, Cambridge University Press, 1992.

\bibitem{MR2508773}
{\sc R.~C. Dalang}, {\em The stochastic wave equation}, in A minicourse on
  stochastic partial differential equations, vol.~1962, Springer, 2009,
  pp.~39--71.

\bibitem{dm09}
{\sc R.~C. Dalang and C.~Mueller}, {\em Intermittency properties in a
  hyperbolic {A}nderson problem}, Ann. Inst. Henri Poincar\'e Probab. Stat., 45
  (2009), pp.~1150--1164.

\bibitem{h03}
{\sc E.~Hausenblas}, {\em Approximation for semilinear stochastic evolution
  equations}, Potential Anal., 18 (2003), pp.~141--186.

\bibitem{jk09}
{\sc A.~Jentzen and P.~E. Kloeden}, {\em Overcoming the order barrier in the
  numerical approximation of stochastic partial differential equations with
  additive space-time noise}, Proc. R. Soc. Lond. Ser. A Math. Phys. Eng. Sci.,
  465 (2009), pp.~649--667.

\bibitem{jr12}
{\sc A.~Jentzen and M.~R{\"o}ckner}, {\em Regularity analysis for stochastic
  partial differential equations with nonlinear multiplicative trace class
  noise}, J. Differential Equations, 252 (2012), pp.~114--136.

\bibitem{kp92}
{\sc P.~E. Kloeden and E.~Platen}, {\em Numerical Solution of Stochastic
  Differential Equations}, vol.~23 of Applications of Mathematics (New York),
  Springer-Verlag, Berlin, 1992.

\bibitem{kll}
{\sc M.~Kov{\'a}cs, S.~Larsson, and F.~Lindgren}, {\em Weak convergence of
  finite element approximations of linear stochastic evolution equations with
  additive noise}, BIT Numer. Math., 52 (2012), pp.~85--108.

\bibitem{kls10}
{\sc M.~Kov{\'a}cs, S.~Larsson, and F.~Saedpanah}, {\em Finite element
  approximation of the linear stochastic wave equation with additive noise},
  SIAM J. Numer. Anal., 48 (2010), pp.~408--427.

\bibitem{lt13}
{\sc G.~J. Lord and A.~Tambue}, {\em Stochastic exponential integrators for the
  finite element discretization of spdes for multiplicative and additive
  noise}, IMA J. Numer. Anal., 33 (2013), pp.~515--543.

\bibitem{mt04}
{\sc G.~N. Milstein and M.~V. Tretyakov}, {\em Stochastic Numerics for
  Mathematical Physics}, Scientific Computation, Springer-Verlag, Berlin, 2004.

\bibitem{hig04}
{\sc A.~H. Str{\o}mmen~Melb{\o} and D.~J. Higham}, {\em Numerical simulation of
  a linear stochastic oscillator with additive noise}, Appl. Numer. Math., 51
  (2004), pp.~89--99.

\bibitem{wal05}
{\sc J.~B. Walsh}, {\em Finite element methods for parabolic stochastic
  {PDE}'s}, Potential Anal., 23 (2005), pp.~1--43.

\bibitem{raey}
{\sc X.~Wang}, {\em An exponential integrator scheme for time discretization of
  nonlinear stochastic wave equation}, J. Sci. Comput., 64 (2015),
  pp.~234--263.

\bibitem{wgt13}
{\sc X.~Wang, S.~Gan, and J.~Tang}, {\em Higher order strong approximations of
  semilinear stochastic wave equation with additive space-time white noise},
  SIAM J. Sci. Comput., 36 (2014), pp.~A2611--A2632.

\bibitem{y05}
{\sc Y.~Yan}, {\em Galerkin finite element methods for stochastic parabolic
  partial differential equations}, SIAM J. Numer. Anal., 43 (2005),
  pp.~1363--1384 (electronic).

\end{thebibliography}

\end{document}